\begin{document}

\title{Some compact notations for concentration inequalities and user-friendly results}

\author
{
	Kaizheng Wang \thanks{Department of Operations Research and Financial Engineering, Princeton University, Princeton, NJ 08544, USA; Email:
		\texttt{kaizheng@princeton.edu}.}
}

\date{December 2019}

\maketitle

\begin{abstract}
This paper presents compact notations for concentration inequalities and convenient results to streamline probabilistic analysis. The new expressions describe the typical sizes and tails of random variables, allowing for simple operations without heavy use of inessential constants. They bridge classical asymptotic notations and modern non-asymptotic tail bounds together. Examples of different kinds demonstrate their efficacy.

\smallskip
\noindent\textbf{Keywords:} Concentration inequalities, constants, suprema, uniform convergence.
\end{abstract}

\section{Introduction}

Concentration inequalities \citep{BLM13, Ver12, Tro12} have become the bread and butter of researchers on theoretical computer science, statistics, information theory, machine learning, signal processing and related fields. They provide non-asymptotic tail bounds for random quantities that facilitate finite-sample analysis of high dimensional problems. Unfortunately, they often contain plenty of constants that make their statements nasty. 
Some constants have to be large in order to make the results hold for finite samples. Some constants are caused by translations between different but essentially equivalent definitions of sub-Gaussianity or similar properties. Things become more daunting when we operate with more than one random variables and random events that are controlled by different concentration inequalities. The exact value of the aforementioned constants are not essential at all; what really matters are the typical sizes and tail decay of the random variables under investigation. While classical asymptotic notations such as $O_{\PP}$ and $o_{\PP}$ \citep{Van00} are able to describe typical sizes with few unwanted constants, they say nothing about tail behaviors and are thus not capable of handling a huge collection of random variables simultaneously. To make matters worse, some variants that have appeared in the literature may cause confusions, see the discussions in \cite{Jan11}.

We propose some compact notations for concentration inequalities to resolve this dilemma and develop handy tools to make probabilistic analysis quick and clean. Borrowing strength from both non-asymptotic and asymptotic characterizations, our key notation $O_{\PP} (\cdot;~ \cdot)$ has two arguments that correspond to sizes and tails of random variables. It easily converts to non-asymptotic tail bounds, admits simple operations and helps avoid repeated definitions of unspecified constants during proofs. Uniform control over a collection of random variables is also discussed. Examples throughout the paper illustrate the efficacy of new notations and results.

The rest of the paper is organized as follows. Section \ref{section-basic-notations} defines basic notations and proves simple rules for elementary operations. Section \ref{section-basic-examples} expresses common results in the new language. Section \ref{section-uniform-notations} presents advanced notations and results for uniform control. Section \ref{section-example-uniform} concludes the paper with an example of uniform convergence in statistical learning.

\subsection*{Notations}
For any real numbers $a,b \in \RR$, let $a \vee b = \max\{a,b\}$ and $a \wedge b = \min\{a,b\}$. The notation $\log$ refers to the natural logarithm. For $n\in \ZZ_+$, $\RR^n$ denotes the $n$-dimensional Euclidean space; $\SSS^{n-1}$ is the unit sphere therein; and $[n] = \{1,2,\cdots,n\}$. $C^k(\RR^n)$ is the family of real-valued functions on $\RR^n$ whose derivatives up to the $k$-th order are all continuous. For any $\bx = (x_1, \cdots, x_n)^{\top} \in \RR ^n$ and $p \geq 1$, define $\| \bx \|_p = (\sum_{i=1}^{n} |x_i|^p)^{1/p}$. For any matrix $\bA \in \RR^{n \times m}$, the matrix spectral norm is $\| \bA \|_2 = \max_{\| \bx\|_2=1} \| \bA \bx \|_2$.
For nonnegative $\{ a_n \}_{n=1}^{\infty}$ and $\{ b_n \}_{n=1}^{\infty}$, $a_n \lesssim b_n$ means $a_n \leq c b_n$, $\forall n$ for some constant $c > 0$. For any random variable $X$, define $\| X \|_{\psi_{\alpha}} = \sup_{p \geq 1} p^{-1/\alpha} \EE^{1/p} |X|^{p}$ for $\alpha \geq 1$. For any random vector $\bX \in \RR^n$, let $\| \bX \|_{\psi_2} = \sup_{\bu \in \SSS^{n - 1} } \| \bu^{\top} \bX \|_{\psi_2}$.


\section{Basic notations and operations}\label{section-basic-notations}

\begin{definition}[The $O_{\PP}$ notation]\label{definition-O}
	Let $\{ X_n \}_{n=1}^{\infty}$, $\{ Y_n \}_{n=1}^{\infty}$ be two sequences of random variables and $\{ r_n \}_{n=1}^{\infty} \subseteq (0,+\infty)$ be deterministic. We write
	\begin{align*}
	X_n = O_{\PP} (Y_n;~ r_n)
	\end{align*}
	if there exist universal constants $(C_1,C_2,N) \in (0,+\infty)^3$ and a non-decreasing function $f:~[C_2,+\infty) \to (0,+\infty)$ satisfying $\lim_{x \to +\infty} f(x) = +\infty$, such that
	\begin{align}
	\PP ( |X_n| \geq t |Y_n| ) \leq C_1 e^{- r_n f(t) }, \qquad \forall ~n \geq N,~ t \geq C_2. 
	\label{ineq-definition-O}
	\end{align}
\end{definition}


The $O_{\PP}(\cdot ;~ \cdot )$ notation is an abstraction of concentration inequalities, which usually have the form in (\ref{ineq-definition-O}). The two arguments in $O_{\PP}(\cdot ;~ \cdot )$ are called the \textit{size} and \textit{tail} arguments. It is worth pointing out that the size argument can be random. This allows for stochastic dominance and facilitates probabilistic analysis. 
Motivated by (\ref{ineq-definition-O}), we may also consider more refined characterizations of the tail behavior and define notations like $X_n = O_{\PP} (Y_n ;~ r_n, f)$. However, as we will see from the examples later, the existing one already suffices in many applications. Below we show some equivalent definitions.

\begin{fact}\label{fact-1}
	The followings are equivalent:
	\begin{enumerate}
		\item $X_n = O_{\PP} (Y_n;~r_n )$;
		\item There exist constants $C_1>0$ and $N>0$ such that
		\begin{align*}
		\forall C > 0,~~ \exists C' >0, \text{ s.t. } \PP ( |X_n| \geq C' |Y_n| ) \leq C_1 e^{- C r_n }, \qquad \forall ~n \geq N.
		\end{align*}
	\end{enumerate}
In addition, if $\PP (Y_n = 0) = 0$ for all $n$, then the conditions above are equivalent to the following: there exists a constant $C_1>0$ such that
	\begin{align*}
	\forall C > 0,~~ \exists C' >0, \text{ s.t. } \PP ( |X_n| \geq C' |Y_n| ) \leq C_1 e^{- C r_n }, \qquad \forall ~n \geq 1.
	\end{align*}
\end{fact}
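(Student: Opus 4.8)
The plan is to establish the first equivalence by a short cycle $(1)\Rightarrow(2)\Rightarrow(1)$, and then, under the extra assumption $\PP(Y_n=0)=0$ for all $n$, to show that (2) can be strengthened to the ``for all $n\ge 1$'' statement (the converse of this strengthening being trivial, since one may take $N=1$).

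For $(1)\Rightarrow(2)$ I would keep the very same constants $C_1$ and $N$ from Definition \ref{definition-O}. Given $C>0$, use that $f$ is non-decreasing on $[C_2,+\infty)$ with $f(x)\to+\infty$ to pick $C'\ge C_2$ with $f(C')\ge C$; then $\PP(|X_n|\ge C'|Y_n|)\le C_1 e^{-r_n f(C')}\le C_1 e^{-Cr_n}$ for all $n\ge N$. For $(2)\Rightarrow(1)$ the task is to manufacture an admissible tail function $f$. Applying (2) with $C=k$ for each $k\in\ZZ_+$ yields $a_k>0$ with $\PP(|X_n|\ge a_k|Y_n|)\le C_1 e^{-kr_n}$ for $n\ge N$; replace $a_k$ by $b_k:=\max\{a_1,\dots,a_k,k\}$, which makes the sequence non-decreasing and divergent while only shrinking the relevant events, so the same bound survives. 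Set $C_2:=b_1$ and let $f:[C_2,+\infty)\to(0,+\infty)$ be the step function equal to $k$ on $[b_k,b_{k+1})$. Since $b_k\uparrow+\infty$, these half-open intervals tile $[C_2,+\infty)$, and $f$ is non-decreasing with $f\to+\infty$; for $n\ge N$ and $t\ge C_2$ with $t\in[b_k,b_{k+1})$, monotonicity of probability gives $\PP(|X_n|\ge t|Y_n|)\le \PP(|X_n|\ge b_k|Y_n|)\le C_1 e^{-kr_n}=C_1 e^{-r_n f(t)}$, which is exactly (1).

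For the addendum, assume $\PP(Y_n=0)=0$ for every $n$ and fix $C>0$; (2) provides a $C_0'$ good for all $n\ge N$. For each of the finitely many indices $n<N$, observe that the events $\{|X_n|\ge s|Y_n|\}$ decrease, as $s\uparrow+\infty$, to an event contained in $\{|Y_n|=0\}$, which is null; by continuity of measure from above, $\PP(|X_n|\ge s|Y_n|)\to 0$, so (the right-hand side being strictly positive) there is $C_n'$ with $\PP(|X_n|\ge C_n'|Y_n|)\le C_1 e^{-Cr_n}$. Taking $C'=\max\{C_0',C_1',\dots,C_{N-1}'\}$ then works for all $n\ge 1$ with the \emph{same} $C_1$, and combining with the already proved equivalence $(1)\Leftrightarrow(2)$ closes the chain.

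The only mildly delicate point is the construction of $f$ in $(2)\Rightarrow(1)$: one must make sure $f$ is genuinely non-decreasing, defined on a whole half-line $[C_2,+\infty)$, and divergent at infinity, which is precisely what the ``$\max\{a_1,\dots,a_k,k\}$'' device secures. Everything else reduces to monotonicity of probability and continuity from above, so I expect no further obstacles.
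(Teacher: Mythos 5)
Your proof is correct and follows essentially the same route as the paper: the paper declares the equivalence of (1) and (2) ``obvious,'' and you simply supply the details (the step-function construction of $f$ from the countable family of constants $b_k$), while your treatment of the addendum---handling the finitely many indices $n<N$ via continuity from above on the nested events $\{|X_n|\ge s|Y_n|\}$ and then taking the maximum of the resulting constants---is exactly the paper's argument with the existence of each $C_n'$ justified explicitly. No gaps.
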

\begin{proof}
The equivalence between conditions 1 and 2 is obvious. Suppose that $\PP (Y_n = 0) = 0$ for all $n$, and condition 2 holds with constants $C_1>0$ and $N>0$. For any $C>0$ we can find $C_0' > 0$ such that
\begin{align*}
 \PP ( |X_n| \geq C_0' |Y_n| ) \leq C_1 e^{- C r_n }, \qquad \forall ~n \geq N.
 \end{align*}
When $1 \leq n < N$, there exists $C_n'>0$ such that $\PP ( |X_n| / |Y_n| \geq C_n' ) \leq C_1 e^{-C r_n}$. Then $C' = \max_{ 0 \leq n < N } C_n'$ satisfies
\begin{align*}
\PP ( |X_n| \geq C' |Y_n| ) \leq C_1 e^{- C r_n }, \qquad \forall ~n \geq 1.
\end{align*}
This finishes the proof.
\end{proof}

We also introduce a weaker notion of tail bounds.

\begin{definition}[The $\hat{O}_{\PP}$ notation]\label{definition-O-hat}
	Let $\{ X_n \}_{n=1}^{\infty}$, $\{ Y_n \}_{n=1}^{\infty}$ be two sequences of random variables and $\{ r_n \}_{n=1}^{\infty} \subseteq (0,+\infty)$ be deterministic. We write
	\begin{align*}
	X_n = \hat{O}_{\PP} (Y_n;~ r_n)
	\end{align*}
	if there exist universal constants $(C_1,C_2,N) \in (0,+\infty)^3$, a non-decreasing function $f:~[C_2,+\infty) \to (0,+\infty)$ satisfying $\lim_{x \to +\infty} f(x) = +\infty$, and a positive deterministic sequence $\{ R_n \}_{n=1}^{\infty}$ tending to infinity such that
	\begin{align}
	\PP ( |X_n| \geq t |Y_n| ) \leq C_1 e^{- r_n f(t) }, \qquad \forall ~n \geq N,~ C_2 \leq  t \leq R_n. 
	\label{ineq-definition-O-hat}
	\end{align}
\end{definition}

\begin{fact}\label{fact-2}
The followings are equivalent:
\begin{enumerate}
\item $X_n = \hat{O}_{\PP} (Y_n ;~ r_n)$;
\item There exists a constant $C_1>0$ such that
\begin{align*}
\forall C > 0,~~ \exists C' >0 \text{ and } N > 0, \text{ s.t. } \PP ( |X_n| \geq C' |Y_n| ) \leq C_1 e^{- C r_n }, \qquad \forall ~n \geq N.
\end{align*}
\end{enumerate}	
\end{fact}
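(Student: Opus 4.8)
The plan is to imitate the proof of Fact \ref{fact-1}; the one genuinely new feature is the truncation sequence $\{R_n\}$, which must absorb the fact that in condition 2 the threshold $N$ is allowed to depend on $C$. For the easy direction $1 \Rightarrow 2$, I would argue directly: suppose $X_n = \hat{O}_{\PP}(Y_n;~ r_n)$ via constants $(C_1, C_2, N_0)$, a non-decreasing $f$ with $f(x) \to +\infty$, and $R_n \to +\infty$. Given $C > 0$, use $f(x) \to +\infty$ to pick $C' \geq C_2$ with $f(C') \geq C$, then use $R_n \to +\infty$ to pick $N_1$ with $R_n \geq C'$ for all $n \geq N_1$. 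For $n \geq N := N_0 \vee N_1$ one has $C_2 \leq C' \leq R_n$, so (\ref{ineq-definition-O-hat}) and $r_n > 0$ yield $\PP(|X_n| \geq C'|Y_n|) \leq C_1 e^{-r_n f(C')} \leq C_1 e^{-C r_n}$, which is condition 2 with the same $C_1$.

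The substantive direction is $2 \Rightarrow 1$. For each $k \in \ZZ_+$ I would apply condition 2 with $C = k$ to obtain $C_k' > 0$ and $N_k > 0$ with $\PP(|X_n| \geq C_k'|Y_n|) \leq C_1 e^{-k r_n}$ for all $n \geq N_k$. Since $c \mapsto \PP(|X_n| \geq c|Y_n|)$ is non-increasing and raising a threshold $N_k$ is harmless, I may replace $C_k'$ by $(\max_{j \leq k} C_j') + k$ and $N_k$ by $(\max_{j \leq k} N_j) \vee k$, after which, without loss of generality, $\{C_k'\}_{k \geq 1}$ is strictly increasing with $C_k' \to +\infty$, $\{N_k\}_{k \geq 1}$ is non-decreasing with $N_k \to +\infty$, and the displayed bounds persist. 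Now set $C_2 := C_1'$ and define $f : [C_2, +\infty) \to (0,+\infty)$ by $f(t) = k$ whenever $t \in [C_k', C_{k+1}')$; this is well defined since $C_k' \to +\infty$, is non-decreasing, and tends to $+\infty$. For $n \geq N := N_1$ set $K_n := \max\{k \in \ZZ_+ :~ N_k \leq n\}$, which is finite because $N_k \geq k$ and at least $1$ because $N_1 \leq n$, and put $R_n := C_{K_n}'$ (and, say, $R_n := C_1'$ for $n < N$). Since $N_k \to +\infty$ forces $K_n \to +\infty$, we get $R_n \to +\infty$.

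It then remains to check (\ref{ineq-definition-O-hat}): fix $n \geq N$ and $C_2 \leq t \leq R_n$ and let $k = f(t)$, so $C_k' \leq t < C_{k+1}'$; from $t \leq R_n = C_{K_n}'$ and the strict monotonicity of $\{C_j'\}$ we get $k \leq K_n$, hence $N_k \leq N_{K_n} \leq n$, and therefore
\begin{align*}
\PP(|X_n| \geq t|Y_n|) \leq \PP(|X_n| \geq C_k'|Y_n|) \leq C_1 e^{-k r_n} = C_1 e^{-r_n f(t)} .
\end{align*}
I expect the only real obstacle to be the bookkeeping in $2 \Rightarrow 1$: the purpose of truncating $t$ at $R_n = C_{K_n}'$ is that, for each fixed $n$, only those indices $k$ whose threshold $N_k$ has already been passed ever get invoked, and one must arrange the auxiliary sequences to be simultaneously monotone and divergent without destroying the probability estimates.
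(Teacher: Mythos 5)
Your proof is correct. The paper itself omits this proof as ``straightforward,'' so there is nothing to compare against; your argument supplies the missing details in the natural way. Both directions check out: the easy direction correctly uses $f(x)\to+\infty$ and $R_n\to+\infty$ to extract a single threshold $C'$ and a single $N$, and the diagonalization in $2\Rightarrow 1$ --- making $\{C_k'\}$ strictly increasing and divergent, making $\{N_k\}$ non-decreasing with $N_k\ge k$, defining $f$ as a step function, and setting $R_n=C_{K_n}'$ so that only indices $k$ with $N_k\le n$ are ever invoked --- is exactly the bookkeeping needed to absorb the $C$-dependence of $N$ into the truncation sequence, mirroring how the paper handles the analogous finite-index issues in Facts \ref{fact-1} and \ref{fact-3}.
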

\begin{proof}
The proof is straightforward and thus omitted.
\end{proof}

\begin{fact}\label{fact-3}
$X_n = O_{\PP} (Y_n;~r_n )$ always implies $X_n = \hat{O}_{\PP} (Y_n ;~ r_n)$. When $\PP (Y_n = 0) = 0$ for all $n$, $X_n = O_{\PP} (Y_n;~r_n )$ and $X_n = \hat{O}_{\PP} (Y_n ;~ r_n)$ are equivalent.
\end{fact}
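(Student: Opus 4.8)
\textbf{First implication ($O_{\PP}\Rightarrow\hat O_{\PP}$).} The plan is simply to recycle the witnesses. Suppose $(C_1,C_2,N)$ and the non-decreasing function $f$ satisfy (\ref{ineq-definition-O}). Then (\ref{ineq-definition-O}) holds in particular for all $t$ with $C_2\le t\le R_n$, no matter which positive deterministic sequence $\{R_n\}$ we pick; choosing $R_n:=n$ (or any other positive sequence tending to $+\infty$) turns (\ref{ineq-definition-O}) into (\ref{ineq-definition-O-hat}). Hence $X_n = \hat O_{\PP}(Y_n;~r_n)$ with the same $(C_1,C_2,N,f)$ and this $\{R_n\}$.

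\textbf{Reverse implication, assuming $\PP(Y_n=0)=0$ for all $n$.} Here I would argue through the equivalent formulations rather than from the raw definitions. Starting from $X_n = \hat O_{\PP}(Y_n;~r_n)$, Fact~\ref{fact-2} gives a constant $C_1>0$ such that for every $C>0$ there exist $C'>0$ and $N>0$ with $\PP(|X_n|\ge C'|Y_n|)\le C_1 e^{-Cr_n}$ for all $n\ge N$. To deduce $X_n = O_{\PP}(Y_n;~r_n)$ it then suffices, by the third characterization in Fact~\ref{fact-1} (which is equivalent to $O_{\PP}$ precisely because $\PP(Y_n=0)=0$), to upgrade ``$\forall n\ge N$'' to ``$\forall n\ge 1$''. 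I handle the finitely many indices $1\le n<N$ exactly as in the proof of Fact~\ref{fact-1}: since $\{Y_n=0\}$ is a null event, $|X_n|/|Y_n|$ is finite almost surely, so $\PP(|X_n|/|Y_n|\ge s)\to 0$ as $s\to+\infty$; because $C_1 e^{-Cr_n}>0$, one can pick $C_n'>0$ with $\PP(|X_n|\ge C_n'|Y_n|)\le C_1 e^{-Cr_n}$. Replacing $C'$ by $\max\{C',C_1',\dots,C_{N-1}'\}$ yields the bound for all $n\ge 1$, and Fact~\ref{fact-1} finishes the argument.

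\textbf{Where the difficulty lies.} There is no substantive obstacle: the statement is essentially a dictionary check between Definitions~\ref{definition-O} and~\ref{definition-O-hat} using the reformulations in Facts~\ref{fact-1} and~\ref{fact-2}. The only point that needs care is the role of the hypothesis $\PP(Y_n=0)=0$ in the converse: it is exactly what keeps the ratio $|X_n|/|Y_n|$ from placing positive mass at $+\infty$, which is simultaneously what makes the truncation threshold $R_n$ in Definition~\ref{definition-O-hat} inessential and what permits the absorption of the small-$n$ indices above. Accordingly, I would add one sentence remarking that without this assumption $\hat O_{\PP}$ is strictly weaker than $O_{\PP}$ (a heavy tail supported on $\{Y_n=0\}$ can hide beyond the cutoff $t\le R_n$), so the equivalence genuinely needs it.
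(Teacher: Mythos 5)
Your proposal is correct and follows essentially the same route as the paper: the forward direction by reusing the witnesses from Definition~\ref{definition-O}, and the converse via Fact~\ref{fact-2}, absorbing the finitely many indices $1\le n<N$ exactly as in the proof of Fact~\ref{fact-1} before invoking that fact. Your extra justification for the existence of the $C_n'$ (that $\PP(Y_n=0)=0$ makes $|X_n|/|Y_n|$ almost surely finite) and the closing remark on why the hypothesis is genuinely needed are welcome additions that the paper leaves implicit.
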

\begin{proof}
It is obviously true that $X_n = O_{\PP} (Y_n;~r_n )$ always implies $X_n = \hat{O}_{\PP} (Y_n ;~ r_n)$. Now we prove the other direction under the additional assumption $\PP (Y_n = 0) = 0$ for all $n$. When $X_n = \hat{O}_{\PP} (Y_n ;~ r_n)$, Fact \ref{fact-2} asserts the existence of a constant $C_1>0$ such that
\begin{align*}
\forall C > 0,~~ \exists C_0' >0 \text{ and } N > 0, \text{ s.t. } \PP ( |X_n| \geq C_0' |Y_n| ) \leq C_1 e^{- C r_n }, \qquad \forall ~n \geq N.
\end{align*}
For any $n < N$, there exists $C_n'>0$ such that $\PP ( |X_n| / |Y_n| \geq C_n' ) \leq C_1 e^{-C r_n}$. Then $C' = \max_{ 0 \leq n < N } C_n'$ satisfies
	\begin{align*}
	\PP ( |X_n| \geq C' |Y_n| ) \leq C_1 e^{- C r_n }, \qquad \forall ~n \geq 1.
	\end{align*}
According to Fact \ref{fact-1}, this implies $X_n = O_{\PP} (Y_n;~r_n )$.
\end{proof}

Now we relate the new notations to a classical one.

\begin{definition}[The classical $O_{\PP}$ notation  \citep{Van00}]
Let $\{ X_n \}_{n=1}^{\infty}$ and $\{ Y_n \}_{n=1}^{\infty}$ be two sequences of random variables. We write 
\begin{enumerate}
\item $X_n = O_{\PP} (1)$ if
\begin{align*}
\forall \varepsilon>0,~~ \exists M > 0, \text{ s.t. } \PP ( |X_n| \geq M ) \leq \varepsilon, ~~ \forall n \geq 1;
\end{align*}
\item $X_n = O_{\PP} (Y_n)$ if $X_n = Y_n Z_n$ for some $Z_n = O_{\PP} (1)$.
\end{enumerate}
\end{definition}

\begin{fact}
If $\PP (Y_n = 0) = 0$ for all $n$, then $X_n = O_{\PP} (Y_n ;~ 1)$, $X_n = \hat{O}_{\PP} (Y_n ;~ 1)$ and $X_n = O_{\PP} (Y_n)$ are all equivalent.
\end{fact}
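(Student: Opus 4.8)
The plan is to reduce everything to the chain $O_{\PP}(Y_n;\,1) \Longleftrightarrow \hat{O}_{\PP}(Y_n;\,1) \Longleftrightarrow X_n = O_{\PP}(Y_n)$, where the first equivalence is immediate from Fact \ref{fact-3} under the hypothesis $\PP(Y_n = 0) = 0$. So the only genuine task is to match the non-asymptotic notation (specialized to $r_n \equiv 1$) against the classical one, and the cleanest route is through the ``for all $n \geq 1$'' reformulations already available. Concretely, Fact \ref{fact-1} tells us that, when $\PP(Y_n = 0) = 0$, the statement $X_n = O_{\PP}(Y_n;\,1)$ is equivalent to: there exists $C_1 > 0$ such that for every $C > 0$ there is $C' > 0$ with $\PP(|X_n| \geq C'|Y_n|) \leq C_1 e^{-C}$ for all $n \geq 1$.

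First I would unpack the classical side. Since $\PP(Y_n = 0) = 0$, setting $Z_n := X_n/Y_n$ on the almost-sure event $\{Y_n \neq 0\}$ (and arbitrarily, say $0$, elsewhere) gives $X_n = Y_n Z_n$ almost surely; conversely any decomposition $X_n = Y_n Z_n$ forces $Z_n = X_n/Y_n$ almost surely. Moreover the events $\{|Z_n| \geq M\}$ and $\{|X_n| \geq M|Y_n|\}$ coincide up to a null set. Hence $X_n = O_{\PP}(Y_n)$ is equivalent to $Z_n = O_{\PP}(1)$, that is, to the tightness statement: for every $\varepsilon > 0$ there exists $M > 0$ such that $\PP(|X_n| \geq M|Y_n|) \leq \varepsilon$ for all $n \geq 1$.

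With both notions in this parallel form, the equivalence becomes a one-line translation in each direction. If $X_n = O_{\PP}(Y_n;\,1)$, fix $\varepsilon > 0$, choose $C > 0$ large enough that $C_1 e^{-C} \leq \varepsilon$, take the corresponding $C'$, and set $M = C'$; this yields the classical tightness bound. Conversely, if $X_n = O_{\PP}(Y_n)$, take $C_1 = 1$, and for any $C > 0$ apply tightness with $\varepsilon = e^{-C}$ to obtain $M$, then set $C' = M$; this verifies the reformulated criterion of Fact \ref{fact-1}, hence $X_n = O_{\PP}(Y_n;\,1)$. Combining with Fact \ref{fact-3} closes the three-way equivalence.

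I do not expect a real obstacle: the only point requiring care is the passage between ``$X_n = Y_n Z_n$ for some $Z_n = O_{\PP}(1)$'' and ``$X_n/Y_n = O_{\PP}(1)$'', which needs $\PP(Y_n = 0) = 0$ both to make the quotient well defined almost surely and to discard the resulting null sets when comparing events; together with the harmless mismatch between $\geq$ and $>$ in the definition of $O_{\PP}(1)$, absorbed by enlarging $M$ slightly. Everything else is routine bookkeeping with the constants $C$, $C'$, $C_1$, $M$ and $\varepsilon$.
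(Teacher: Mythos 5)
Your proof is correct. Note that the paper states this fact without any proof, so there is nothing to compare against; your route --- combining Fact \ref{fact-3} with the ``$\forall n \geq 1$'' reformulation in Fact \ref{fact-1} and then translating $\varepsilon \leftrightarrow C_1 e^{-C}$ and $M \leftrightarrow C'$ --- is the natural argument and is consistent in style with the paper's own proofs of Facts \ref{fact-1} and \ref{fact-3}. Your side remark about $X_n = Y_n Z_n$ holding only almost surely (on the null set $\{Y_n = 0\}$ no choice of $Z_n$ can force exact equality if $X_n \neq 0$ there) is a genuine, if pedantic, gap in the classical definition itself rather than in your argument, and you dispose of it appropriately.
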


The new notations $O_{\PP}(\cdot ;~ \cdot)$ and $\hat{O}_{\PP}(\cdot ;~ \cdot)$ characterize the sizes and tails of random variables. Just like the equivalence between $O_{\PP}(1)$ and tightness \citep{Van00}, $O_{\PP} (1;~ r_n)$ and $\hat{O}_{\PP}(1 ;~ r_n)$ are related to the exponential tightness in large deviation theory \citep{DZe11}.
In the expression $X_n = O_{\PP} (Y_n;~ r_n)$, the relation between $Y_n$ and $r_n$ is determined by properties of $X_n$. Smaller $Y_n$ (in absolute value) comes with smaller $r_n$ and thus larger exceptional probability.
\begin{example}
Let $\{ Z_i \}_{i=1}^{\infty}$ be i.i.d.~$N(0,1)$ random variables and $X_n = \frac{1}{n} \sum_{i=1}^{n} Z_i$. We have $X_n \sim N(0,1/\sqrt{n})$. A Hoeffding-type inequality \citep[Proposition 5.10]{Ver12} asserts the existence of a constant $c>0$ such that
\begin{align*}
\PP ( |X_n| > t \sqrt{r_n / n} ) \leq e \cdot e^{ -c ( t \sqrt{r_n / n} )^2 } = e \cdot e^{ - c r_n t^2 }, \qquad \forall n \geq 0,~ r_n \geq 0,~  t \geq 0.
\end{align*}
Hence $X_n = O_{\PP} ( \sqrt{r_n / n} ;~ r_n )$ for any $r_n>0$. At different resolutions we get different rates of tail decay.
\end{example}

In addition to $O_{\PP}$, we can also extend other classical notations.

\begin{definition}
Let $\{ X_n \}_{n=1}^{\infty}$, $\{ Y_n \}_{n=1}^{\infty}$ be two sequences of random variables, $Y_n \geq 0$ for all $n$, and $\{ r_n \}_{n=1}^{\infty} \subseteq (0,+\infty)$ be deterministic. We write
	\begin{align*}
	X_n = \Omega_{\PP} (Y_n;~ r_n)
	\end{align*}
	if there exist universal constants $(C_1,C_2,N) \in (0,+\infty)^3$ and a non-increasing function $f:~ (0,C_2] \to (0,+\infty)$ satisfying $\lim_{x \to 0} f(0) = +\infty$, such that
	\begin{align*}
	\PP ( X_n \leq t Y_n ) \leq C_1 e^{- r_n f(t) }, \qquad \forall ~n \geq N,~ 0 < t \leq C_2. 
	\end{align*}
\end{definition}

\begin{definition}
	We write
\begin{enumerate}
\item $X_n = o_{\PP} (Y_n;~ r_n)$ if $X_n = O_{\PP} (w_n Y_n ;~ r_n)$ holds for some nonnegative deterministic sequence $\{ w_n \}_{n=1}^{\infty}$ tending to zero;
\item $X_n = \hat{o}_{\PP} (Y_n;~ r_n)$ if $X_n = \hat{O}_{\PP} (w_n Y_n ;~ r_n)$ holds for some nonnegative deterministic sequence $\{ w_n \}_{n=1}^{\infty}$ tending to zero;
\item $X_n = \omega_{\PP} (Y_n;~ r_n)$ if $X_n = \Omega_{\PP} (w_n Y_n ;~ r_n)$ holds for some nonnegative deterministic sequence $\{ w_n \}_{n=1}^{\infty}$ tending to infinity;
\item $X_n = \Theta_{\PP} (Y_n;~ r_n)$ if $X_n = O_{\PP} (Y_n ;~ r_n)$ and $X_n = \Omega_{\PP} (Y_n ;~ r_n)$.
\end{enumerate}
\end{definition}

We conclude this section with some handy results. The proofs of Lemmas \ref{lemma-o}, \ref{lemma-arithmetic} and \ref{lemma-transforms} are straightforward and thus omitted.

\begin{lemma}\label{lemma-o}
	If $X_n = \hat{o}_{\PP} (Y_n;~ r_n)$, then $\lim_{n\to\infty} r_n^{-1} \log \PP ( |X_n| \geq c |Y_n| ) = -\infty$ for any constant $c > 0$. Here we adopt the convention $\log 0 = -\infty$.
\end{lemma}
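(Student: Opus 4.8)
The plan is to unwind the definition down to the convenient reformulation in Fact \ref{fact-2} and then compare events. By definition of $\hat{o}_{\PP}$, there is a nonnegative deterministic sequence $\{w_n\}$ with $w_n \to 0$ such that $X_n = \hat{O}_{\PP}(w_n Y_n ;~ r_n)$. Applying Fact \ref{fact-2} to this relation produces a constant $C_1 > 0$ with the following property: for every $C > 0$ there exist $C' > 0$ and $N > 0$ such that
\begin{align*}
\PP(|X_n| \geq C' w_n |Y_n|) \leq C_1 e^{-C r_n}, \qquad \forall\, n \geq N.
\end{align*}

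Now fix a constant $c > 0$, and let $M > 0$ be an arbitrary target level. Apply the display above with $C = 2M$, obtaining the corresponding $C'$ and $N$. Since $w_n \to 0$, there is $N'$ with $C' w_n \leq c$ for all $n \geq N'$, and for such $n$ we have the event inclusion $\{|X_n| \geq c |Y_n|\} \subseteq \{|X_n| \geq C' w_n |Y_n|\}$. Hence $\PP(|X_n| \geq c|Y_n|) \leq C_1 e^{-2M r_n}$ for every $n \geq \max\{N, N'\}$. Taking logarithms (with the convention $\log 0 = -\infty$, under which the inequality is trivially true when the probability vanishes) and dividing by $r_n > 0$,
\begin{align*}
r_n^{-1}\log \PP(|X_n| \geq c|Y_n|) \leq r_n^{-1}\log C_1 - 2M, \qquad \forall\, n \geq \max\{N, N'\}.
\end{align*}

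To close the argument I would invoke that $r_n$ stays bounded away from zero in the setting of interest (indeed $r_n \to \infty$ in the applications), say $r_n \geq \delta > 0$ for all $n$. Then $r_n^{-1}\log C_1 \leq \delta^{-1}(\log C_1)_+ =: A$, a finite constant not depending on $M$, so the last display gives $\limsup_{n\to\infty} r_n^{-1}\log \PP(|X_n| \geq c|Y_n|) \leq A - 2M$. Since $M > 0$ was arbitrary while $A$ is fixed, letting $M \to \infty$ forces the limit superior, hence the limit, to equal $-\infty$, which is precisely the claim.

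The one delicate point, which I expect to be the main obstacle, is the bookkeeping around the universal constant $C_1$: it is fixed before $C$ is chosen and need not be at most $1$, so the residual term $r_n^{-1}\log C_1$ does not vanish on its own. One must absorb it using $r_n \to \infty$ (or boundedness of $r_n$ away from $0$) as above, and some such control on $r_n$ is genuinely necessary for the conclusion. Everything else — passing through Fact \ref{fact-2}, the single event inclusion, and monotonicity of $\log$ — is routine.
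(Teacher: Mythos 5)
Your argument is correct and is essentially the ``straightforward'' proof the paper omits: unwind $\hat{o}_{\PP}$ to $X_n=\hat{O}_{\PP}(w_nY_n;~r_n)$, pass through Fact \ref{fact-2}, use $w_n\to 0$ to get the event inclusion $\{|X_n|\ge c|Y_n|\}\subseteq\{|X_n|\ge C'w_n|Y_n|\}$ for large $n$, and then let the exponent run off to $-\infty$. Your flagged ``delicate point'' is also a genuine observation rather than a defect of your write-up: since the universal constant $C_1$ may exceed $1$, the residual term $r_n^{-1}\log C_1$ must be absorbed, and this requires $r_n$ to be bounded away from $0$ (which holds in every application in the paper, where $r_n\to\infty$). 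Indeed, without such a restriction the lemma as literally stated fails: if, say, $r_n=1/n^2$, then for $C_2\le t\le R_n:=2^{n^2}$ one has $4e^{-r_n\log(1+t)}\ge 1$, so \emph{every} pair $(X_n,Y_n)$ satisfies $X_n=\hat{O}_{\PP}(n^{-1}Y_n;~r_n)$ vacuously, yet for $X_n=Y_n\equiv 1$ and $c=1/2$ the quantity $r_n^{-1}\log\PP(|X_n|\ge c|Y_n|)$ is identically $0$. So your proof is complete under the (implicitly intended) standing hypothesis $\liminf_n r_n>0$, and you were right that some such hypothesis is necessary.
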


\begin{lemma}[Addition and multiplication]\label{lemma-arithmetic}
If $X_n = \hat{O}_{\PP} (Y_n ;~ r_n )$ and $W_n = \hat{O}_{\PP} (Z_n ;~ s_n )$, then
\begin{align*}
& X_n + W_n = \hat{O}_{\PP} ( |Y_n| + |Z_n| ;~ r_n \wedge s_n ) , \\
& X_n W_n = \hat{O}_{\PP} ( Y_n Z_n ;~ r_n \wedge s_n ).
\end{align*}
\end{lemma}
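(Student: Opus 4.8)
The plan is to reduce both claims to the reformulation in Fact \ref{fact-2} and then combine the two tail bounds through a union bound together with two elementary deterministic inequalities. First I would apply Fact \ref{fact-2} to each hypothesis: there are constants $C_1, C_1' > 0$ such that for every $C > 0$ one can pick $C', C'' > 0$ and $N_1, N_2 > 0$ with $\PP(|X_n| \geq C'|Y_n|) \leq C_1 e^{-C r_n}$ for all $n \geq N_1$ and $\PP(|W_n| \geq C''|Z_n|) \leq C_1' e^{-C s_n}$ for all $n \geq N_2$; the point of passing through Fact \ref{fact-2} is that a single target exponent $C$ can be fed into both hypotheses at once.

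For the additive statement, the key deterministic observation is that if $|X_n| < C'|Y_n|$ and $|W_n| < C''|Z_n|$, then $|X_n + W_n| \leq |X_n| + |W_n| < (C' \vee C'')(|Y_n| + |Z_n|)$, so that
\[
\{ |X_n + W_n| \geq (C' \vee C'') (|Y_n| + |Z_n|) \} \subseteq \{ |X_n| \geq C'|Y_n| \} \cup \{ |W_n| \geq C''|Z_n| \}.
\]
Then I would apply the union bound and the estimate $e^{-C r_n} \vee e^{-C s_n} \leq e^{-C (r_n \wedge s_n)}$ to get, for all $n \geq N_1 \vee N_2$, the bound $\PP( |X_n + W_n| \geq (C'\vee C'')(|Y_n| + |Z_n|) ) \leq (C_1 + C_1') e^{-C(r_n \wedge s_n)}$. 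Since the prefactor $C_1 + C_1'$ does not depend on $C$, the reverse direction of Fact \ref{fact-2} yields $X_n + W_n = \hat{O}_{\PP}(|Y_n| + |Z_n|;~ r_n \wedge s_n)$.

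The multiplicative statement is handled in exactly the same way, with the inclusion replaced by $\{ |X_n W_n| \geq C'C''|Y_n Z_n| \} \subseteq \{ |X_n| \geq C'|Y_n| \} \cup \{ |W_n| \geq C''|Z_n| \}$, which follows from $|X_n W_n| = |X_n|\,|W_n|$, $|Y_n Z_n| = |Y_n|\,|Z_n|$, and the fact that $|X_n| < C'|Y_n|$ and $|W_n| < C''|Z_n|$ force $|X_n W_n| < C'C''|Y_n Z_n|$. The identical union-bound computation then produces the bound $(C_1 + C_1') e^{-C(r_n \wedge s_n)}$, and Fact \ref{fact-2} converts it into $X_n W_n = \hat{O}_{\PP}(Y_n Z_n;~ r_n \wedge s_n)$.

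I do not expect a genuine obstacle here: the content is a union bound plus two one-line inequalities. The only thing to watch is the quantifier bookkeeping in Fact \ref{fact-2} — the thresholds $C', C''$ and indices $N_1, N_2$ are allowed to depend on $C$, but one must check that the combined prefactor $C_1 + C_1'$ does not, so that Fact \ref{fact-2} applies in the reverse direction. It is also worth noting that the argument never uses $\PP(Y_n = 0) = 0$ or $\PP(Z_n = 0) = 0$; if one wanted the analogous statement for the stronger notation $O_{\PP}(\cdot;~\cdot)$ under such non-degeneracy, one would simply combine this lemma with Fact \ref{fact-3}.
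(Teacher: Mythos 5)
Your argument is correct, and the paper in fact omits this proof as ``straightforward'' --- what you wrote (reduce to the Fact~\ref{fact-2} reformulation, use the two deterministic event inclusions, take a union bound, and note that the prefactor $C_1+C_1'$ is independent of $C$) is exactly the intended routine argument, in the same style as the paper's proof of the truncation lemma. Your closing observations about not needing $\PP(Y_n=0)=0$ and about upgrading to $O_{\PP}(\cdot;~\cdot)$ via Fact~\ref{fact-3} are also accurate.
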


\begin{lemma}[Transforms]\label{lemma-transforms}
We have the followings:
\begin{enumerate}
\item if $X_n = \hat{O}_{\PP} (Y_n ;~ r_n)$, then $|X_n|^{\alpha} = \hat{O}_{\PP} ( |Y_n|^{\alpha} ;~ r_n)$ for any $\alpha > 0$;
\item if $X_n = \hat{o}_{\PP} (1 ;~ r_n)$, then $f(X_n) = \hat{o}_{\PP} ( 1 ;~ r_n )$ for any $f:~\RR\to \RR$ that is continuous at $0$.
\end{enumerate}
\end{lemma}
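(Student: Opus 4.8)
The plan is to verify both parts directly from Definition \ref{definition-O-hat} together with the equivalent characterization in Fact \ref{fact-2}; both are elementary, with the only care needed in Part 2.

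For Part 1, fix $\alpha>0$ and take $(C_1,C_2,N)$, a non-decreasing $f:[C_2,\infty)\to(0,\infty)$ with $f(x)\to\infty$, and $R_n\to\infty$ witnessing $X_n=\hat{O}_{\PP}(Y_n;~r_n)$. Since $u\mapsto u^{\alpha}$ is strictly increasing on $[0,\infty)$ and $|X_n|,|Y_n|\geq 0$, for every $s>0$ one has the event identity $\{|X_n|^{\alpha}\geq s|Y_n|^{\alpha}\}=\{|X_n|\geq s^{1/\alpha}|Y_n|\}$. Hence, whenever $n\geq N$ and $C_2^{\alpha}\leq s\leq R_n^{\alpha}$ (equivalently $C_2\leq s^{1/\alpha}\leq R_n$),
\[
\PP(|X_n|^{\alpha}\geq s|Y_n|^{\alpha})=\PP(|X_n|\geq s^{1/\alpha}|Y_n|)\leq C_1 e^{-r_n f(s^{1/\alpha})}.
\]
Taking $g(s)=f(s^{1/\alpha})$ (non-decreasing, positive on $[C_2^{\alpha},\infty)$, tending to $\infty$), $C_2'=C_2^{\alpha}$, $R_n'=R_n^{\alpha}\to\infty$, $N'=N$ and $C_1'=C_1$ gives $|X_n|^{\alpha}=\hat{O}_{\PP}(|Y_n|^{\alpha};~r_n)$. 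There is no real obstacle here.

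For Part 2, I would first extract a clean consequence of $X_n=\hat{o}_{\PP}(1;~r_n)$. Write $X_n=\hat{O}_{\PP}(w_n;~r_n)$ with deterministic $w_n\geq 0$, $w_n\to 0$. By Fact \ref{fact-2} there is a constant $C_1>0$ (uniform in what follows) such that for every $C>0$ there are $C',\tilde N$ with $\PP(|X_n|\geq C'w_n)\leq C_1 e^{-Cr_n}$ for $n\geq\tilde N$; since $C'w_n\to 0$, this yields: for every $\varepsilon>0$ and every $C>0$, $\PP(|X_n|\geq\varepsilon)\leq C_1 e^{-Cr_n}$ for all large $n$. Next I use continuity of $f$ at $0$ — note that $f(0)=0$ is needed for the conclusion to even be meaningful, so I will assume it (equivalently, the statement should be read for such $f$, or as $f(X_n)-f(0)=\hat{o}_{\PP}(1;~r_n)$). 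For each $j\in\NN$ choose $\delta_j>0$ with $|f(x)|<1/j$ whenever $|x|<\delta_j$, so $\{|f(X_n)|\geq 1/j\}\subseteq\{|X_n|\geq\delta_j\}$, whence for every $j$ and $C$, $\PP(|f(X_n)|\geq 1/j)\leq C_1 e^{-Cr_n}$ for all large $n$.

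Finally I would package this into $\hat{o}_{\PP}$ form by a diagonal choice of the rate sequence. Pick strictly increasing integers $N_k$ (with $N_0=1$, say) such that $\PP(|f(X_n)|\geq 1/k)\leq C_1 e^{-kr_n}$ for all $n\geq N_k$, and define the deterministic sequence $v_n=1/k$ for $N_k\leq n<N_{k+1}$, so that $v_n\to 0$. Then for every $C>0$ and every $n\geq N_{\lceil C\rceil}$, writing $n\in[N_k,N_{k+1})$ with $k\geq\lceil C\rceil\geq C$, we get $\PP(|f(X_n)|\geq v_n)=\PP(|f(X_n)|\geq 1/k)\leq C_1 e^{-kr_n}\leq C_1 e^{-Cr_n}$. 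By Fact \ref{fact-2} (with $C'=1$) this gives $f(X_n)=\hat{O}_{\PP}(v_n;~r_n)$, hence $f(X_n)=\hat{o}_{\PP}(1;~r_n)$. The mildly delicate points are the bookkeeping around the upper cutoff $R_n$ in Definition \ref{definition-O-hat} (cleanly avoided by routing through Fact \ref{fact-2}) and the diagonalization producing one sequence $v_n\to 0$ that works uniformly over all $C$; the $f(0)=0$ hypothesis is the one genuine subtlety worth flagging.
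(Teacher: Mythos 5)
Your proof is correct. The paper declares the proof of this lemma ``straightforward and thus omitted,'' and your argument supplies exactly the natural details: Part 1 by the monotone change of variables $t\mapsto t^{1/\alpha}$ applied to the witnessing function and cutoffs, and Part 2 by combining continuity at $0$ with a diagonal construction of a single deterministic sequence $v_n\to 0$ that works for all rates $C$ simultaneously (routing through Fact \ref{fact-2} to sidestep the $R_n$ bookkeeping is the right move; the only cosmetic slip is that $v_n=1/k$ is undefined on the initial block $k=0$, where any value, say $v_n=1$, suffices since Fact \ref{fact-2} only constrains large $n$). Your flag that Part 2 requires $f(0)=0$ is a genuine and worthwhile observation: as stated the claim fails for, e.g., $f\equiv 1$ whenever $r_n$ is bounded away from $0$, so the lemma should be read either with that hypothesis or with the conclusion $f(X_n)-f(0)=\hat{o}_{\PP}(1;~r_n)$.
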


\begin{lemma}[Truncation]
	If $\hat{X}_n = \hat{O}_{\PP} (Y_n ;~ r_n)$ and $\lim_{n\to\infty} r_n^{-1} \log \PP ( |X_n| \geq |\hat{X}_n| ) = -\infty$, then
	\begin{align*}
X_n = \hat{O}_{\PP} (Y_n ;~ r_n).
	\end{align*}
\end{lemma}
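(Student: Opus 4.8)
The plan is to decompose the tail event of $X_n$ into a ``truncated'' part, on which $X_n$ is dominated by $\hat X_n$, and a ``bad'' part, on which $|X_n|\geq |\hat X_n|$; control the first part using $\hat X_n = \hat O_{\PP}(Y_n;~r_n)$ and the second using the assumed super-exponential decay. Throughout I would work through the equivalent characterization in Fact \ref{fact-2}, which is the cleanest interface for both the hypothesis and the conclusion.

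First I would record the set inclusion
\begin{align*}
\{ |X_n| \geq t|Y_n| \} \subseteq \{ |\hat X_n| \geq t|Y_n| \} \cup \{ |X_n| \geq |\hat X_n| \},
\end{align*}
valid for every $t>0$: on the event $\{|X_n| \geq t|Y_n|\}$, either $|X_n| < |\hat X_n|$, which forces $|\hat X_n| > t|Y_n|$, or $|X_n| \geq |\hat X_n|$. Applying the union bound gives
\begin{align*}
\PP( |X_n| \geq t|Y_n| ) \leq \PP( |\hat X_n| \geq t|Y_n| ) + \PP( |X_n| \geq |\hat X_n| ), \qquad \forall t > 0.
\end{align*}

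Next I would fix an arbitrary $C>0$. By Fact \ref{fact-2} applied to $\hat X_n = \hat O_{\PP}(Y_n;~r_n)$, there are a constant $C_1>0$ (chosen once, independent of $C$) and numbers $C'>0$, $N_1>0$ with $\PP(|\hat X_n| \geq C'|Y_n|) \leq C_1 e^{-Cr_n}$ for all $n \geq N_1$. The hypothesis $\lim_{n\to\infty} r_n^{-1}\log\PP(|X_n| \geq |\hat X_n|) = -\infty$ yields $N_2>0$ with $r_n^{-1}\log\PP(|X_n| \geq |\hat X_n|) \leq -C$, hence $\PP(|X_n| \geq |\hat X_n|) \leq e^{-Cr_n}$, for all $n \geq N_2$ (using $r_n>0$, and consistently with the convention $\log 0 = -\infty$). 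Substituting $t = C'$ into the union bound, for all $n \geq N_1 \vee N_2$ we get $\PP(|X_n| \geq C'|Y_n|) \leq (C_1+1)e^{-Cr_n}$. Since $C_1+1$ does not depend on $C$, this is precisely condition 2 of Fact \ref{fact-2} for $X_n$, so $X_n = \hat O_{\PP}(Y_n;~r_n)$.

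I do not expect a genuine obstacle here; the only points needing care are the set-theoretic inclusion in the first step and respecting the quantifier order in Fact \ref{fact-2} — the universal constant must be fixed before $C$, which is automatic because it is inherited from $\hat X_n$ and we only ever increase it by $1$. One could also note that the same decomposition shows why the $\hat O_{\PP}$ (rather than $O_{\PP}$) version is the natural one to state: no assumption is made on $\PP(Y_n=0)$, and the conclusion is exactly of the ``$\forall C~\exists C',N$'' form.
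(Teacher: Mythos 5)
Your proof is correct and follows essentially the same route as the paper: decompose the event $\{|X_n|\geq C'|Y_n|\}$ according to whether $|X_n|<|\hat X_n|$ or $|X_n|\geq|\hat X_n|$, bound the two pieces by the $\hat O_{\PP}$ hypothesis on $\hat X_n$ and the super-exponential decay assumption respectively, and conclude via Fact~\ref{fact-2} with the constant $C_1+1$. Your attention to the quantifier order (fixing $C_1$ before $C$) matches the paper's handling exactly.
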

\begin{proof}
When $\hat{X}_n = \hat{O}_{\PP} (Y_n ;~ r_n)$, Fact \ref{fact-2} asserts the existence of a constant $C_1>0$ such that
\begin{align*}
\forall C > 0,~~ \exists C' >0 \text{ and } N > 0, \text{ s.t. } \PP ( |\hat{X}_n| \geq C' |Y_n| ) \leq C_1 e^{- C r_n }, \qquad \forall ~n \geq N.
\end{align*}
Fix $C>0$ and find $C'>0$, $N>0$ to make the inequality above holds. Then
\begin{align*}
\PP ( |X_n| \geq C' |Y_n| ) & = \PP ( |X_n| \geq C' |Y_n|,~|X_n| < |\hat{X}_n|  ) + \PP ( |X_n| \geq C' |Y_n|,~|X_n| \geq |\hat{X}_n|  ) \\
&\leq \PP ( |\hat{X}_n| \geq C' |Y_n|  ) + \PP ( |X_n| \geq |\hat{X}_n|  )  \\
& \leq C_1 e^{- C r_n } + \PP ( |X_n| \geq |\hat{X}_n|  ) , \qquad \forall ~n \geq N.
\end{align*}
Since $\lim_{n\to\infty} r_n^{-1} \log \PP ( |X_n| \geq |\hat{X}_n| ) = -\infty$, there exists $N' >0$ such that $\PP ( |X_n| \geq |\hat{X}_n| ) \leq e^{-Cr_n}$ for all $n \geq N' $. As a result,
\begin{align*}
\PP ( |X_n| \geq C' |Y_n| )  \leq C_1 e^{- C r_n } + e^{-Cr_n}
= (C_1 + 1) e^{-Cr_n}, \qquad \forall ~n \geq N \vee N'.
\end{align*}
This proves $X_n = \hat{O}_{\PP} (Y_n ;~ r_n)$.
\end{proof}

\section{Basic examples}\label{section-basic-examples}

In this section, we express common results in the new language. They will serve as building blocks of our advanced examples. For the sake of brevity, we focus on the most important notation $O_{\PP} (\cdot;~\cdot)$.

\begin{example}[From moments to tails]\label{lemma-moments}
If $\EE^{1/r_n} |X_n|^{r_n} \leq Y_n$, then $X_n = O_{\PP} ( Y_n;~ r_n )$.
\end{example}
\begin{proof}
By Markov's inequality,
\begin{align*}
\PP ( |X_n |\geq t Y_n ) = \PP [ |X_n |^{r_n} \geq (t Y_n)^{r_n} ] \leq \frac{ \EE |X_n|^{r_n} }{ (t Y_n )^{r_n} } \leq 1/t^{r_n} = e^{- r_n \log t }, \qquad \forall t > 0.
\end{align*}
\end{proof}

\begin{example}[$\ell_{p}$ norms of random vectors]\label{lemma-ell_p}
Suppose that for any $n \in \ZZ_+$, $r_n > 0$ and $\bX_n \in \RR^n$ is a random vector with $\max_{i \in [n]} \EE^{1/r_n} | X_{ni} |^{r_n} \leq Y_n$. Then
\begin{enumerate}
\item $\| \bX_n \|_{r_n} = O_{\PP} ( n^{1/r_n} Y_n ;~ r_n )$;
\item $\| \bX_n \|_{\infty} = O_{\PP} ( Y_n ;~ r_n )$ as long as $r_n \gtrsim \log n$.
\end{enumerate}
\end{example}
\begin{proof}
By direct calculation,
\begin{align*}
\EE \| \bX_n \|_{r_n}^{r_n} = \sum_{i=1}^{n} \EE |X_{ni}|^{r_n} \leq \sum_{i=1}^{n} Y_n^{r_n} = n Y_n^{r_n}.
\end{align*}
Then $\EE^{1/r_n} \| \bX_n \|_{r_n}^{r_n} \leq n^{1/r_n} Y_n$, from where Example \ref{lemma-moments} leads to $\| \bX_n \|_{r_n} = O_{\PP} ( n^{1/r_n} Y_n ;~ r_n )$. Given the elementary fact below, the tail bounds for $\ell_{p}$ norms readily yields $\ell_{\infty}$ results.
\begin{fact}
	If $n \in \ZZ_+ $, $c>0$ and $p > c \log n$, then $n^{1/p} < e^{1/c}$ and
	\begin{align*}
	\| \bx \|_{\infty} \leq \| \bx \|_{p} \leq e^{1/c} \| \bx \|_{\infty}, \qquad \forall \bx \in \RR^n.
	\end{align*}
\end{fact}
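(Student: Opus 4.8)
The plan is to treat the two assertions separately, both by entirely elementary manipulations, and then combine them. For the bound $n^{1/p} < e^{1/c}$, I would take logarithms: since $n \in \ZZ_+$ we have $\log n \geq 0$, and the hypothesis $p > c\log n$ (with $c>0$, $p>0$) rearranges to $\tfrac{1}{p}\log n < \tfrac{1}{c}$. Exponentiating gives $n^{1/p} = \exp(\tfrac{1}{p}\log n) < \exp(1/c) = e^{1/c}$. The degenerate case $n=1$ is covered automatically, since then $\log n = 0$ and $n^{1/p} = 1 < e^{1/c}$.

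For the norm comparison I would prove the two one-sided inequalities $\|\bx\|_\infty \leq \|\bx\|_p$ and $\|\bx\|_p \leq n^{1/p}\|\bx\|_\infty$, and then feed the already-established bound $n^{1/p} < e^{1/c}$ into the second. The left inequality follows from
\[ \|\bx\|_\infty^p = \max_{i\in[n]} |x_i|^p \leq \sum_{i=1}^{n} |x_i|^p = \|\bx\|_p^p \]
upon taking $p$-th roots, using that $t \mapsto t^{1/p}$ is nondecreasing on $[0,+\infty)$ for $p \geq 1$. The right inequality follows from
\[ \|\bx\|_p^p = \sum_{i=1}^{n} |x_i|^p \leq n\max_{i\in[n]} |x_i|^p = n\|\bx\|_\infty^p \]
by the same monotonicity, giving $\|\bx\|_p \leq n^{1/p}\|\bx\|_\infty$. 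Chaining this with $n^{1/p} < e^{1/c}$ yields $\|\bx\|_p < e^{1/c}\|\bx\|_\infty$, which together with $\|\bx\|_\infty \leq \|\bx\|_p$ is the claimed sandwich.

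I do not anticipate any genuine obstacle: the statement is a purely deterministic norm-equivalence fact, and the only points requiring a moment's care are the trivial edge case $n=1$ in the logarithmic estimate and the correct invocation of monotonicity of $t \mapsto t^{1/p}$ when passing from the $p$-th powers to the norms themselves. None of the $O_{\PP}(\cdot;~\cdot)$ machinery from the earlier sections is needed here; the Fact is invoked only afterwards to convert the $\ell_{r_n}$ tail bound $\|\bX_n\|_{r_n} = O_{\PP}(n^{1/r_n} Y_n;~ r_n)$ into the $\ell_\infty$ bound $\|\bX_n\|_\infty = O_{\PP}(Y_n;~ r_n)$ in the regime $r_n \gtrsim \log n$, where $n^{1/r_n}$ is bounded by a universal constant.
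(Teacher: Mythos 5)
Your proof is correct; the paper states this Fact without proof (it appears as an unproved ``elementary fact'' inside the proof of Example~\ref{lemma-ell_p}), and your argument --- taking logarithms for $n^{1/p}<e^{1/c}$ and comparing $\max_i|x_i|^p$ with $\sum_i|x_i|^p$ for the norm sandwich --- is exactly the standard one being relied upon. The only microscopic slip is that the chained bound $\|\bx\|_p< e^{1/c}\|\bx\|_\infty$ fails to be strict when $\bx=\mathbf{0}$, but the statement only claims the non-strict inequality, so nothing is lost.
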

\end{proof}

\begin{example}[Tail bounds via $\| \cdot \|_{\psi_{\alpha}}$]\label{lemma-psi}
Let $\alpha \geq 1$. If $r_n \geq 1$ and $\| X_n \|_{\psi_{\alpha}} \leq 1$ for all $n$, then $X_n = O_{\PP} (  r_n^{1/\alpha} ;~ r_n )$.
\end{example}
\begin{proof}
	The desired result follows from $r_n^{-1/\alpha} \EE^{1/r_n} |X_n|^{r_n} \leq \| X_n \|_{\psi_{\alpha}} \leq 1$ and Example \ref{lemma-moments}.
\end{proof}

\begin{example}[$\ell_{p}$ norms via $\| \cdot \|_{\psi_{\alpha}}$]\label{lemma-ell_p-sub}
Let $\alpha \geq 1$. Suppose that for any $n \in \ZZ_+$, $r_n \geq 1$, $\bX_n \in \RR^n$ is a random vector and $\max_{i \in [n]} \| X_{ni} \|_{\psi_{\alpha}} \leq 1$. Then 
\begin{enumerate}
\item $\| \bX_n \|_{r_n} = O_{\PP} ( n^{1/r_n} r_n^{1/\alpha} ;~ r_n )$;
\item $\| \bX_n \|_{\infty} = O_{\PP} ( r_n^{1/\alpha} ;~ r_n )$ when $r_n \gtrsim \log n$.
\end{enumerate}
\end{example}
\begin{proof}
	The desired result follows from $r_n^{-1/\alpha} \EE^{1/r_n} |X_{ni}|^{r_n} \leq \| X_{ni} \|_{\psi_{\alpha}} \leq 1$ and Example \ref{lemma-ell_p}.
\end{proof}

\begin{example}[$\ell_2$ norms of sub-Gaussian vectors]
Suppose that for any $n \in \ZZ_+$, $\bX_n \in \RR^{d_n}$ is a random vector with $\EE \bX_n = \mathbf{0}$ and $\| \bX_n \|_{\psi_2} \leq 1$. Then $\| \bX_n \|_2 = O_{\PP} ( \sqrt{ r_n } ;~ r_n )$ for any $r_n \gtrsim d_n$.
\end{example}
\begin{proof}
By Theorem 2.1 in \cite{HKZ12}, there exists a constant $c>0$ such that
\begin{align*}
\PP [ \| \bX_n \|_2^2 > c(d_n + 2 \sqrt{d_n t} + 2 t)  ] \leq e^{-t}, \qquad \forall t > 0.
\end{align*}
Then the claim is clearly true.
\end{proof}

\begin{example}[Concentration of the sample mean]\label{lemma-average}
Let $\alpha \in \{1, 2\}$, $\{ X_{ni} \}_{n \in \ZZ_+,~i \in [n] }$ be an array of random variables where for any $n$, $\{ X_{ni} \}_{i=1}^n$ are independent and $\max_{i \in [n]} \| X_{ni} \|_{\psi_{\alpha}} \leq 1$. Define $\bar{X}_n = \frac{1}{n} \sum_{i=1}^{n} X_{ni}$. For any $r_n > 0$, we have
\begin{align*}
\bar X_n - \EE\bar{X}_n = \begin{cases}
O_{\PP} ( \sqrt{ r_n / n } ;~ r_n \wedge n ), &\mbox{ if } \alpha = 1 \\
O_{\PP} ( \sqrt{ r_n / n } ;~ r_n ), &\mbox{ if } \alpha = 2
\end{cases}.
\end{align*}
\end{example}
\begin{proof}
When $\alpha = 1$, a Bernstein-type inequality \citep[Proposition 5.16]{Ver12} asserts the existence of an absolute constant $c>0$ such that
\begin{align*}
\PP ( |\bar X_n - \EE\bar{X}_n| \geq t ) \leq 2 e^{- c n (t^2 \wedge t) }, \qquad \forall n \geq 1,~ t \geq 0.
\end{align*}
When $t = s  \sqrt{ r_n / n }$ with $s \geq 1$, we have $nt = s \sqrt{n r_n} \geq s (r_n \wedge n)$ and $nt^2 = s^2 r_n \geq s (r_n \wedge n)$. Hence 
\begin{align*}
\PP ( |\bar X_n - \EE\bar{X}_n| \geq s  \sqrt{ r_n / n } ) \leq 2 e^{- c (r_n \wedge n) s }, \qquad \forall n \geq 1,~ s \geq 1,
\end{align*}
and $\bar X_n - \EE\bar{X}_n= O_{\PP} (  \sqrt{ r_n / n } ;~ r_n \wedge n )$.

When $\alpha = 2$, a Hoeffding-type inequality \citep[Proposition 5.10]{Ver12} asserts the existence of an absolute constant $c>0$ such that
\begin{align*}
\PP ( |\bar X_n - \EE\bar{X}_n| \geq t ) \leq e \cdot e^{- c t^2 }, \qquad \forall n \geq 1,~ t \geq 0.
\end{align*}
Hence 
\begin{align*}
\PP ( |\bar X_n - \EE\bar{X}_n| \geq s \sqrt{ r_n / n } ) \leq e \cdot e^{- c r_n s^2 }, \qquad \forall n \geq 1,~ s \geq 0,
\end{align*}
and $\bar X_n - \EE\bar{X}_n = O_{\PP} ( \sqrt{ r_n / n } ;~ r_n )$.
\end{proof}

\section{Uniform tail bounds for a collection of random variables}\label{section-uniform-notations}

In this section we present notations and useful results for uniform control over a family of random variables, which is of crucial importance in many applications.

\begin{definition}\label{definition-O-uniform}
	Let $\{ \Lambda_n \}_{n=1}^{\infty}$ be a sequence of finite index sets. For any $n \geq 1$, $\{ X_{n\lambda} \}_{\lambda \in \Lambda_n}$, $\{ Y_{n\lambda} \}_{\lambda \in \Lambda_n}$ are two collections of random variables; $\{ r_{n\lambda} \}_{\lambda \in \Lambda_n} \subseteq (0,+\infty)$ are deterministic. We write
	\begin{align}
	\{ X_{n\lambda} \}_{\lambda \in \Lambda_n} = O_{\PP} ( \{ Y_{n\lambda} \}_{\lambda \in \Lambda_n};~ \{ r_{n\lambda} \}_{\lambda \in \Lambda_n} )
	\label{eqn-definition-O-uniform}
	\end{align}
	if there exist universal constants $(C_1,C_2,N) \in (0,+\infty)^3$ and a non-decreasing function $f:~[C_2,+\infty) \to (0,+\infty)$ satisfying $\lim_{x \to +\infty} f(x) = +\infty$, such that
	\begin{align*}
	\PP ( |X_{n\lambda}| \geq t |Y_{n\lambda}| ) \leq C_1 e^{-r_{n\lambda} f(t) }, \qquad \forall n \geq N,~ \lambda \in \Lambda_n, ~ t \geq C_2.
	\end{align*}
	When $Y_{n\lambda} = Y_n$ and/or $r_{n\lambda} = r_n$ for all $n$ and $\lambda$, we may replace
	$ \{ Y_{n\lambda} \}_{\lambda \in \Lambda_n}$ and/or $\{ r_{n\lambda} \}_{\lambda \in \Lambda_n} $ in (\ref{eqn-definition-O-uniform}) by $Y_n$ and/or $r_n$ for simplicity.
\end{definition}

In a similar manner, we can also define uniform versions of $o_{\PP} (\cdot ;~\cdot)$ and others. When we have a uniform tail bound and the index set is not exceedingly large, the lemma below states that the maximum still satisfies the same tail bound.

\begin{lemma}[Suprema over finite index sets]\label{lemma-O-union}
If $\{ X_{n\lambda} \}_{\lambda \in \Lambda_n} = O_{\PP} ( \{ Y_{n\lambda} \}_{\lambda \in \Lambda_n}  ;~ r_n )$ and $\log |\Lambda_n| \lesssim r_n$, then
\begin{align*}
\max_{\lambda \in \Lambda_n} | X_{n\lambda} | = O_{\PP} \left( \max_{\lambda \in \Lambda_n} |Y_{n\lambda}| ;~r_{n} \right).
\end{align*}
\end{lemma}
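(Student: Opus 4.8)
The plan is to reduce the supremum bound to a union bound over $\Lambda_n$ applied to the uniform tail inequality supplied by the hypothesis, and then to absorb the resulting factor $|\Lambda_n|$ into the exponential using $\log|\Lambda_n| \lesssim r_n$. First I would invoke Definition \ref{definition-O-uniform}: there exist universal constants $(C_1,C_2,N)$ and a non-decreasing $f:[C_2,+\infty)\to(0,+\infty)$ with $f(x)\to+\infty$ such that $\PP(|X_{n\lambda}|\geq t|Y_{n\lambda}|)\leq C_1 e^{-r_n f(t)}$ for all $n\geq N$, $\lambda\in\Lambda_n$, $t\geq C_2$. Since $|Y_{n\lambda}|\leq \max_{\mu\in\Lambda_n}|Y_{n\mu}|$ for each $\lambda$, the event $\{\max_{\lambda}|X_{n\lambda}|\geq t\max_{\mu}|Y_{n\mu}|\}$ is contained in $\bigcup_{\lambda\in\Lambda_n}\{|X_{n\lambda}|\geq t|Y_{n\lambda}|\}$, so a union bound gives
\begin{align*}
\PP\Bigl(\max_{\lambda\in\Lambda_n}|X_{n\lambda}|\geq t\max_{\mu\in\Lambda_n}|Y_{n\mu}|\Bigr)\leq |\Lambda_n|\,C_1 e^{-r_n f(t)},\qquad \forall n\geq N,~t\geq C_2.
\end{align*}

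Next I would rewrite $|\Lambda_n| = e^{\log|\Lambda_n|}$ and use $\log|\Lambda_n|\leq c\, r_n$ for some constant $c>0$ (valid for $n$ large, say $n\geq N'$, after possibly enlarging $N$) to get the bound $C_1 e^{-r_n(f(t)-c)}$. The task is then to produce a new non-decreasing tail function $g$ with $g\to+\infty$ and a threshold $C_2'$ such that $f(t)-c\geq g(t)$ for $t\geq C_2'$; the natural choice is $g(t) = f(t) - c$ on the domain where this is positive and large. Concretely, since $f(x)\to+\infty$, pick $C_2'\geq C_2$ with $f(C_2')\geq 2c$, and set $g(t) = f(t)-c$ for $t\geq C_2'$; then $g$ is non-decreasing, $g(t)\geq f(t)/2\to+\infty$, and $\PP(\max_\lambda|X_{n\lambda}|\geq t\max_\mu|Y_{n\mu}|)\leq C_1 e^{-r_n g(t)}$ for all $n\geq N\vee N'$ and $t\geq C_2'$, which is exactly the defining inequality \eqref{ineq-definition-O} for $\max_\lambda|X_{n\lambda}| = O_{\PP}(\max_\mu|Y_{n\mu}|;~r_n)$ with constants $(C_1,C_2',N\vee N')$ and function $g$.

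The only mild subtlety — and the step I would be most careful about — is the bookkeeping around the domain of $f$ and the requirement that the new tail function still diverge: subtracting the constant $c$ must not destroy monotonicity or positivity, which is why one restricts to $t\geq C_2'$ where $f(t)$ already exceeds $2c$. Everything else is a routine union bound. One could alternatively phrase the whole argument through Fact \ref{fact-1}(2): from the hypothesis, for every $C>0$ there is $C'>0$ with $\PP(|X_{n\lambda}|\geq C'|Y_{n\lambda}|)\leq C_1 e^{-Cr_n}$ for all large $n$ and all $\lambda$; a union bound yields $\PP(\max_\lambda|X_{n\lambda}|\geq C'\max_\mu|Y_{n\mu}|)\leq |\Lambda_n|C_1 e^{-Cr_n}\leq C_1 e^{-(C-c)r_n}$, and since $C$ is arbitrary this is again the characterization in Fact \ref{fact-1}(2), giving the conclusion. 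I would present whichever of these two routes is shorter, likely the second, as it sidesteps the explicit manipulation of $f$.
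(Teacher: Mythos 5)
Your proposal is correct and follows essentially the same route as the paper: a union bound over $\Lambda_n$ applied to the uniform tail inequality, followed by absorbing $|\Lambda_n|=e^{\log|\Lambda_n|}$ into the exponent via $\log|\Lambda_n|\lesssim r_n$. The paper compresses the final step into ``the result becomes obvious,'' whereas you spell out the construction of the new tail function $g(t)=f(t)-c$ on a shifted threshold where $f(t)\geq 2c$ --- a worthwhile detail, and the rest matches.
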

\begin{proof}
	By definition, we can find universal constants $(C_1,C_2,N) \in (0,+\infty)^3$ and a non-decreasing function $f:~[C_2,+\infty)$ with $\lim_{x \to +\infty} f(x) = +\infty$, such that
	\begin{align*}
	\PP ( | X_{n \lambda} | \geq t |Y_{n \lambda}| ) \leq C_1 e^{-r_n f(t)}, \qquad \forall n \geq N,~ \lambda \in \Lambda_n, ~ t \geq C_2.
	\end{align*}
	By union bounds,
	\begin{align*}
	\PP \bigg( \max_{\lambda \in \Lambda_n} | X_{n \lambda} | \geq t \max_{\lambda \in \Lambda_n} |Y_{n \lambda}| \bigg)
	& \leq \sum_{\lambda \in \Lambda_n} \PP ( | X_{n \lambda} | \geq t |Y_{n \lambda}| )
	\leq |\Lambda_n| C_1 e^{-r_n f(t)} = C_1 e^{ -r_n [ f(t) - r_n^{-1} \log |\Lambda_n| ] } .
	\end{align*}
	Then the result becomes obvious given $\log |\Lambda_n| \lesssim r_n$.
\end{proof}

\begin{remark}[$\ell_{\infty}$ bounds revisited]
When proving the $\ell_{\infty}$ bounds in Example \ref{lemma-ell_p}, we resorted to the $r_n$-th moments with $r_n \gtrsim \log n$ and then applied $\ell_{p}$ results with $ p = r_n$. Here Lemma \ref{lemma-O-union} leads to a more direct proof.
\end{remark}

Based on Lemma \ref{lemma-O-union}, it is straightforward to use covering arguments to control the suprema of certain stochastic processes with continuous index sets.

\begin{definition}
	Let $( \cS , \rho )$ be a metric space and $\varepsilon > 0$. $\cN \subseteq \cS$ is said to be an $\varepsilon$-net of $\cS$ if for any $\bx \in \cS$ there exists $\by \in \cN$ such that $\rho(\bx , \by ) \leq \varepsilon$.
\end{definition}

\begin{theorem}[Suprema]\label{lemma-O-covering}
	Suppose that for any $n \in \ZZ_+$, $( \cS_n , \rho_n )$ is a metric space and $\cN_n$ is a finite subset of $ \cS_n$; $\{X_n(u) \}_{u \in \cS_n}$ is a collection of random variables; $Y_n$, $M_n$ and $Z_n$ are random variables; $r_n$, $s_n$ and $\varepsilon_n$ are positive and deterministic. If
	\begin{enumerate}
		\item $\{ X_{n}(u) \}_{u \in \cN_n} = O_{\PP} ( Y_{n};~ r_n )$;
		\item $\cN_n$ is an $\varepsilon_n$-net of $\cS_n$ and $\log |\cN_n| \lesssim r_n$;
		\item $M_n = O_{\PP} (Z_n ;~ s_n)$; for any $n \geq 1$ and $(u, v) \in \cS_n \times \cS_n$,
		\begin{align}
		|X_n(u) - X_n(v)| \leq \bigg( \frac{1}{2\varepsilon_n}  \sup_{w \in \cS_n} |X_n(w)| + M_n \bigg) \rho_n(u , v) , \qquad \text{a.s.};
		\label{ineq-lemma-O-covering}
		\end{align}
	\end{enumerate}
	then
	\begin{align*}
\sup_{u \in \cS_n} |X_n (u)| = O_{\PP} ( |Y_n| + \varepsilon_n |Z_n| ; ~ r_n \wedge s_n ).
\end{align*}
\end{theorem}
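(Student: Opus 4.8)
The plan is a three-step covering argument, in the spirit of the proof of Lemma \ref{lemma-O-union}.

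\emph{Step 1 (discretization).} Writing $S_n := \sup_{u \in \cS_n} |X_n(u)|$, which I take to be finite almost surely (otherwise the asserted bound is vacuous), I would fix $u \in \cS_n$, use that $\cN_n$ is an $\varepsilon_n$-net to pick $v \in \cN_n$ with $\rho_n(u,v) \le \varepsilon_n$, and invoke the Lipschitz-type hypothesis (\ref{ineq-lemma-O-covering}) to obtain, almost surely,
\begin{align*}
|X_n(u)| \le |X_n(v)| + \Big( \tfrac{1}{2\varepsilon_n} S_n + M_n \Big) \rho_n(u,v) \le \max_{w \in \cN_n} |X_n(w)| + \tfrac12 S_n + \varepsilon_n M_n .
\end{align*}
Taking the supremum over $u \in \cS_n$ and rearranging then gives the almost sure bound
\begin{align*}
S_n \le 2 \max_{w \in \cN_n} |X_n(w)| + 2 \varepsilon_n M_n .
\end{align*}

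\emph{Step 2 (the net term).} By hypotheses 1 and 2, $\{X_n(w)\}_{w \in \cN_n} = O_{\PP}(Y_n ;~ r_n)$ with $\log |\cN_n| \lesssim r_n$, so Lemma \ref{lemma-O-union} immediately yields $\max_{w \in \cN_n} |X_n(w)| = O_{\PP}(|Y_n| ;~ r_n)$. Since $\varepsilon_n > 0$ is deterministic, hypothesis 3 likewise gives $2\varepsilon_n M_n = O_{\PP}(\varepsilon_n |Z_n| ;~ s_n)$, because rescaling the size and tail arguments by a common deterministic factor does not affect Definition \ref{definition-O}.

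\emph{Step 3 (assembling the tail bound).} I would unwind the two $O_{\PP}$ statements from Step 2 into explicit tail bounds with rate functions $f_1, f_2$, observe that on $\{ S_n \ge t(|Y_n| + \varepsilon_n|Z_n|) \}$ at least one of the two summands in the Step 1 bound exceeds $\tfrac t2$ times its own size argument, and conclude via a union bound that
\begin{align*}
\PP\big( S_n \ge t ( |Y_n| + \varepsilon_n |Z_n| ) \big) \le C_1' e^{- r_n f_1(t/2)} + C_1'' e^{- s_n f_2(t/2)} \le (C_1' + C_1'') e^{ - (r_n \wedge s_n) [ f_1(t/2) \wedge f_2(t/2) ] }
\end{align*}
for all large $n$ and $t$. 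Since $f_1(t/2) \wedge f_2(t/2)$ is non-decreasing and diverges, this is precisely (\ref{ineq-definition-O}) with size $|Y_n| + \varepsilon_n|Z_n|$ and tail $r_n \wedge s_n$, proving the claim.

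\emph{Where the difficulty lies.} There is no serious obstacle; the one point requiring care is the rearrangement in Step 1, which uses both the almost sure finiteness of $S_n$ and, crucially, the specific constant $\tfrac{1}{2\varepsilon_n}$ in (\ref{ineq-lemma-O-covering}) — it is exactly what makes the coefficient of $S_n$ equal to $\tfrac12 < 1$, so that $S_n$ can be absorbed to the left-hand side. A minor bookkeeping point is that I carry out Step 3 directly rather than through Lemma \ref{lemma-arithmetic}, so that the conclusion stays in the $O_{\PP}$ (and not merely $\hat O_{\PP}$) class.
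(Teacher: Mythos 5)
Your proof is correct and follows essentially the same route as the paper's: discretize over the $\varepsilon_n$-net, control the maximum over the net with Lemma \ref{lemma-O-union}, use the factor $\frac{1}{2\varepsilon_n}$ to absorb $\frac{1}{2}\sup_u|X_n(u)|$ into the left-hand side, and finish with a union bound over the two tail events. The only cosmetic difference is that you first establish the almost-sure inequality $\sup_{u}|X_n(u)| \leq 2\max_{w\in\cN_n}|X_n(w)| + 2\varepsilon_n M_n$ and then apply the tail bounds, whereas the paper performs the same rearrangement on the intersection of the two good events.
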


The tail bound for the supremum in Theorem \ref{lemma-O-covering} is affected by concentration of individual random variables, fineness of the covering, as well as smoothness of the stochastic process. In many applications we may directly find some theoretically tractable random variable $M_n$ dominating the Lipschitz constant of the process, i.e.
\begin{align*}
|X_n(u) - X_n(v)| \leq M_n \rho_n(u , v), \qquad \forall n \in \ZZ_+ ,~ u \in \cS_n ,~ v \in \cS_n.
\end{align*}
In addition, some stochastic processes exhibit certain ``self-bounding'' properties. For instance, if $\bA_n \in \RR^{n \times n}$ is a symmetric random matrix and $X_n(\bu) = \bu^{\top} \bA_n \bu$ for $\bu \in \SSS^{n-1}$, then
\begin{align}
| X_n(\bu) - X_n(\bv) | = |(\bu + \bv)^{\top} \bA_n (\bu - \bv)| \leq \| \bu + \bv \|_2 \| \bA_n \|_2 \| \bu - \bv \|_2 \leq 2 \sup_{\bu \in \SSS^{n - 1} } |X_n(\bu)| \cdot \| \bu - \bv \|_2 .
\label{ineq-lemma-O-covering-1}
\end{align}
When $\varepsilon_n \leq 1/4$, (\ref{ineq-lemma-O-covering}) holds with $M_n = 0$. Similar self-bounding properties have been studied in the literature of concentration inequalities \citep{BLM00}. For the sake of generality, the upper bound in (\ref{ineq-lemma-O-covering}) includes both $M_n$ and the supremum itself. The proof of Theorem \ref{theorem-main} in Section \ref{section-example-uniform} justifies its applicability.

\begin{proof}
In view of Lemma \ref{lemma-O-union}, we have $\max_{u \in \cN_n} |X_n(u)| = O_{\PP} ( Y_n ;~ r_n)$. Define two events for $t \geq 0$:
\begin{align*}
\cA_{nt} = \bigg\{ \max_{u \in \cN_n} |X_n(u)| < t |Y_n| \bigg\}   \qquad \text{and} \qquad \cB_{nt} = \{ |M_n| < t |Z_n| \}.
\end{align*}
We can find universal constants $(C_1,C_2,N) \in (0,+\infty)^3$ and a non-decreasing function $f:~[C_2,+\infty)$ with $\lim_{x \to +\infty} f(x) = +\infty$, such that
	\begin{align*}
	\PP ( \cA_{nt}^{\mathrm{c}} ) \leq C_1 e^{-r_n f(t)} \qquad \text{and} \qquad 
	\PP ( \cB_{nt}^{\mathrm{c}} ) \leq C_1 e^{-s_n f(t)}, \qquad \forall n \geq N, ~ t \geq C_2.
	\end{align*}
	Thanks to the $\varepsilon_n$-net property of $\cN_n$ and the Lipschitz property of $X_n(\cdot)$, on the event $\cA_{nt} \cap \cB_{nt}$ we have
	\begin{align*}
	\sup_{u \in \cS_n} |X_n(u)| \leq \max_{u \in \cN_n} |X_n(u)| + \bigg( \frac{1}{2\varepsilon_n} \sup_{u \in \cS_n} |X_n(u)| + M_n \bigg) \varepsilon_n < t Y_n + \frac{1}{2} \sup_{u \in \cS_n} |X_n(u)| + t \varepsilon_n |Z_n|.
	\end{align*}
	and thus $\sup_{u \in \cS_n} |X_n(u)| < 2t (|Y_n| + \varepsilon_n |Z_n|)$. The proof is then finished by
	\begin{align*}
	\PP \bigg( \sup_{u \in \cS_n} |X_n(u)| \geq 2t(|Y_n| + \varepsilon_n |Z_n|) \bigg)
	\leq \PP (\cA_{nt}^{\mathrm{c}}) + \PP (\cB_{nt}^{\mathrm{c}} ) \leq 2 C_1 e^{-(r_n \wedge s_n) [ f(t) \wedge g(t) ]}, \qquad \forall n \geq N,~ t \geq C_2.
	\end{align*}
\end{proof}

\section{Example: uniform convergence of empirical gradients}\label{section-example-uniform}

In this section we use an example in statistical learning to illustrate how our new expressions and results make derivations quick and clean.

Let $\ell \in C^2 (\RR)$, $| \ell'(0) | \leq 1$ and $\sup_{x \in \RR}|\ell''(x) | \leq  1$. Suppose that for any $n \in \ZZ_+$, $\{ \bX_{ni} \}_{i=1}^n$ are independent random vectors in $\RR^{d_n}$ and $\max_{i \in [n]} \| \bX_{ni} \|_{\psi_2} \leq 1$. Define
\begin{align*}
\hat L_n (\btheta) = \frac{1}{n} \sum_{i=1}^{n} \ell (\btheta^{\top} \bX_{ni})
\end{align*}
and $ L_n (\btheta) = \EE \hat L_n (\btheta)$. We have $\hat{L}_n \in C^2 (\RR^{d_n})$,
\begin{align*}
& \nabla \hat L_n (\btheta) =  \frac{1}{n} \sum_{i=1}^{n}  \bX_{ni}  \ell' ( \btheta^{\top} \bX_{ni} )  , \\
& \nabla^2 \hat L_n (\btheta) =  \frac{1}{n} \sum_{i=1}^{n}  \bX_{ni} \bX_{ni}^{\top} \ell'' ( \btheta^{\top} \bX_{ni} ) .
\end{align*} 

Such loss functions arise in many applications including generalized linear models, projection pursuit, neural networks, etc. We will show that when $R>0$ is a constant and $n \geq C d_n $ for some sufficiently large constant $C$, there exist positive constants $C_1$, $C_2$,  and $N$ such that
\begin{align*}
\PP \bigg( \sup_{ \| \btheta \|_2 \leq R } \| \nabla \hat L_n (\btheta) - \nabla L_n (\btheta) \|_2 \geq C_1 \sqrt{ \frac{ d_n \log (n / d_n) }{n} } \bigg) \leq C_2 \bigg(  \frac{d_n}{n} \bigg)^{d_n}, \qquad \forall n \geq N.
\end{align*}
To begin with, we derive a crude bound on the smoothness of $\nabla \hat{L}_n - \nabla L_n$.

\begin{lemma}\label{lemma-Hessian}
If $d_n \lesssim n$, then
\begin{align*}
\sup_{\btheta \in \RR^{d_n}} \| \nabla^2 \hat L_n (\btheta) \|_2 + \sup_{\btheta \in \RR^{d_n}} \| \nabla^2 L_n (\btheta) \|_2 = O_{\PP} ( 1 ;~ n).
\end{align*}
\end{lemma}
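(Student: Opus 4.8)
The plan is to bound $\| \nabla^2 \hat L_n(\btheta) \|_2$ uniformly in $\btheta$ by passing through the operator-norm bound $\| \ell''(\cdot) \|_\infty \leq 1$, which removes the dependence on $\btheta$ entirely, and then to control $\| \frac{1}{n} \sum_i \bX_{ni} \bX_{ni}^\top \|_2$. Concretely, since $0 \preceq \bX_{ni}\bX_{ni}^\top$ and $|\ell''| \leq 1$, we have $-\frac{1}{n}\sum_i \bX_{ni}\bX_{ni}^\top \preceq \nabla^2 \hat L_n(\btheta) \preceq \frac{1}{n}\sum_i \bX_{ni}\bX_{ni}^\top$, so $\sup_{\btheta} \| \nabla^2 \hat L_n(\btheta) \|_2 \leq \| \frac{1}{n}\sum_{i=1}^n \bX_{ni}\bX_{ni}^\top \|_2$ pointwise (almost surely). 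The same argument bounds $\sup_{\btheta} \| \nabla^2 L_n(\btheta) \|_2$ by $\| \frac{1}{n}\sum_{i=1}^n \EE \bX_{ni}\bX_{ni}^\top \|_2 \leq 1$, since $\EE \bX_{ni}\bX_{ni}^\top \preceq \| \bX_{ni}\|_{\psi_2}^2 \bI \preceq \bI$ (a standard consequence of the definition of $\| \cdot \|_{\psi_2}$). Hence the second term is $O_{\PP}(1;~n)$ trivially, and it remains to show $\| \frac{1}{n}\sum_{i=1}^n \bX_{ni}\bX_{ni}^\top \|_2 = O_{\PP}(1;~n)$.

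For the random term I would invoke a standard non-asymptotic bound on the operator norm of sample covariance matrices of independent sub-Gaussian vectors (e.g.\ \citealp[Theorem 5.39 and Remark 5.40]{Ver12}, or the matrix Bernstein / Matrix Chernoff machinery of \citealp{Tro12}): there is an absolute constant $c>0$ such that
\begin{align*}
\PP \bigg( \bigg\| \frac{1}{n}\sum_{i=1}^n \bX_{ni}\bX_{ni}^\top \bigg\|_2 \geq t \bigg) \leq 2 d_n \, e^{ - c n (t \wedge t^2) } \qquad \text{for } t \geq c',
\end{align*}
valid once $t$ exceeds a constant (so that $\| \frac{1}{n}\sum_i \bX_{ni}\bX_{ni}^\top \|_2$ is within a constant factor of its expectation up to deviations). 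Using $d_n \lesssim n$, we absorb the factor $d_n$: $2 d_n e^{-cn(t\wedge t^2)} \leq 2 e^{-cn(t\wedge t^2) + \log d_n} \leq 2 e^{-c' n (t \wedge t^2)}$ for a possibly smaller constant $c'$ and $t$ beyond some threshold. Since $t \wedge t^2 = t$ for $t \geq 1$, this gives $\PP( \| \frac{1}{n}\sum_i \bX_{ni}\bX_{ni}^\top \|_2 \geq t ) \leq 2 e^{-c' n t}$ for $t$ large, i.e.\ exactly the form $C_1 e^{-r_n f(t)}$ with $r_n = n$ and $f(t) = c' t$ required by Definition \ref{definition-O}. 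Adding the two bounds and applying Lemma \ref{lemma-arithmetic} (or simply noting the sum of a term that is $O_{\PP}(1;~n)$ and a deterministic $O(1)$ quantity is again $O_{\PP}(1;~n)$) yields the claim.

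The main obstacle is purely bookkeeping: making sure the $\log d_n$ term from the union bound in the matrix concentration inequality is genuinely dominated by $n$, which is exactly where the hypothesis $d_n \lesssim n$ enters, and checking that the tail exponent $n(t \wedge t^2)$ has the right shape to match a non-decreasing $f$ with $f(t) \to \infty$ after restricting to $t$ above a constant. No delicate probabilistic argument is needed beyond citing the sub-Gaussian covariance estimate; the reduction via $|\ell''| \leq 1$ is the only genuinely problem-specific step and it is immediate from positive semidefiniteness of the rank-one terms.
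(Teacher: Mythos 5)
Your reduction step is exactly the paper's: using $|\ell''|\leq 1$ and positive semidefiniteness of the rank-one terms to bound both Hessians by $\big\| \frac{1}{n}\sum_{i=1}^n \bX_{ni}\bX_{ni}^{\top} \big\|_2 = \sup_{\bu \in \SSS^{d_n-1}} \frac{1}{n}\sum_{i=1}^n (\bu^{\top}\bX_{ni})^2$ and its expectation, the latter being deterministically $O(1)$. Where you diverge is the treatment of the random part: the paper does \emph{not} cite an off-the-shelf covariance bound, but instead runs its own Theorem \ref{lemma-O-covering} on the process $X_n(\bu) = \frac{1}{n}\sum_i (\bu^{\top}\bX_{ni})^2$ --- a $1/4$-net of the sphere with $\log|\cN_n| = d_n\log 9 \lesssim n$, the Bernstein-type bound of Example \ref{lemma-average} applied to the sub-exponential variables $(\bu^{\top}\bX_{ni})^2$ at each net point, and the self-bounding Lipschitz property (\ref{ineq-lemma-O-covering-1}) with $M_n = 0$. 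This is deliberate: the lemma doubles as a demonstration of the paper's machinery, and it keeps the argument self-contained in the $O_{\PP}(\cdot;~\cdot)$ calculus. Your route, citing the sub-Gaussian covariance estimate of \cite{Ver12} or matrix concentration from \cite{Tro12}, is shorter and perfectly valid --- indeed the proof of that cited result is essentially the same net argument --- at the cost of importing a black box and having to massage its tail into the form $C_1 e^{-n f(t)}$, which you do correctly (the $\log d_n$ union-bound factor is absorbed since $\log d_n \leq d_n \lesssim n$, and $t\wedge t^2 = t$ for $t\geq 1$ gives a non-decreasing $f$).

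Two small caveats, neither fatal. First, Theorem 5.39 of \cite{Ver12} as stated assumes isotropic rows; here the $\bX_{ni}$ are only assumed to satisfy $\|\bX_{ni}\|_{\psi_2}\leq 1$ and need not be isotropic or identically distributed, so you must invoke the non-isotropic extension (Remark 5.40 there, or a matrix Bernstein bound for sub-exponential summands), which changes the displayed tail to one centered at the second-moment matrix rather than at a multiple of the identity --- the conclusion $O_{\PP}(1;~n)$ survives unchanged. Second, under the paper's definition $\|X\|_{\psi_2} = \sup_{p\geq 1} p^{-1/2}\EE^{1/p}|X|^p$ one gets $\EE(\bu^{\top}\bX_{ni})^2 \leq 2\|\bu^{\top}\bX_{ni}\|_{\psi_2}^2 \leq 2$, so the deterministic bound is $2$ rather than $1$ (the paper's display (\ref{eqn-lemma-Hessian}) also gets $2$); this is immaterial for an $O_{\PP}(1;~n)$ claim.
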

\begin{proof}
Define $X_n(\bu) = \frac{1}{n} \sum_{i=1}^n (\bu^{\top} \bX_{ni})^2$ for $\bu \in \SSS^{d-1}$. It is easily seen from $|\ell''| \leq 1$ that
\begin{align*}
&\| \nabla^2 \hat L_n (\btheta) \|_2 = \sup_{\bu \in \SSS^{d_n - 1} } | \bu^{\top}  \nabla^2 \hat L_n (\btheta) \bu | = \sup_{\bu \in \SSS^{d_n - 1} } \bigg|  \frac{1}{n} \sum_{i=1}^n (\bu^{\top} \bX_{ni})^2 \ell''(\btheta^{\top} \bX_{ni}) \bigg|
\leq \sup_{\bu \in \SSS^{d_n - 1} } X_n(\bu),
\end{align*}
and similarly, $\| \nabla^2 L_n (\btheta) \|_2 \leq \sup_{\bu \in \SSS^{d_n - 1} } \EE X_n(\bu)$.

When $\bu \in \SSS^{d-1}$, $ 2^{-1/2} \EE^{1/2} (\bu^{\top} \bX_{ni})^2 \leq \| \bu^{\top} \bX_{ni} \|_{\psi_2} \leq \| \bX_{ni} \|_{\psi_2} \leq 1$. Hence
\begin{align}
\sup_{\bu \in \SSS^{d_n - 1} } \EE X_n (\bu) 
= \sup_{\bu \in \SSS^{d_n - 1} } \frac{1}{n} \sum_{i=1}^{n} \EE (\bu^{\top} \bX_{ni})^2  \leq 2.
\label{eqn-lemma-Hessian}
\end{align}
Below we use Theorem \ref{lemma-O-covering} to show that $ \sup_{\bu \in \SSS^{d_n - 1} } X_n (\bu) = O_{\PP} (1;~n)$.
\begin{enumerate}
\item Since $\| ( \bu^{\top} \bX_{ni} )^2 \|_{\psi_1} \lesssim \|  \bu^{\top} \bX_{ni} \|_{\psi_2}^2 \leq 1$, Example \ref{lemma-average} forces
\begin{align*}
\{ X_n(\bu) - \EE X_n(\bu) \}_{\bu \in \SSS^{d_n - 1} } = O_{\PP} ( 1 ;~ n ).
\end{align*}
Then (\ref{eqn-lemma-Hessian}) leads to $\{ X_n(\bu) \}_{\bu \in \SSS^{d_n - 1} } = O_{\PP} ( 1 ;~ n )$.
\item Let $\varepsilon_n = 1/4$. According to Lemma 5.2 in \cite{Ver12}, there is an $\varepsilon_n$-net $\cN_n$ of $\SSS^{d_n - 1}$ with cardinality at most $(1 + 2/ \varepsilon_n)^{d_n}$. Then $\log |\cN_n| = d_n \log 9 \lesssim n$.
\item Similar to (\ref{ineq-lemma-O-covering-1}), we have
\begin{align*}
| X_n (\bu) - X_n (\bv) | \leq \frac{1}{2 \varepsilon_n} \sup_{\bw \in \SSS^{d_n - 1} } |X_n (\bw)| \cdot \| \bu - \bv \|_2, \qquad \forall \bu \in \SSS^{d_n - 1} ,~  \bv \in \SSS^{d_n - 1}.
\end{align*}
\end{enumerate}
Based on all these, Theorem \ref{lemma-O-covering} asserts that $\sup_{\bu \in \SSS^{d_n - 1} } X_n (\bu) = O_{\PP} ( 1 ;~ n )$.
\end{proof}

Now we are ready to prove the main result.

\begin{theorem}\label{theorem-main}
For any constant $R > 0$, there exists a constant $C > 0$ such that when $n \geq C d_n $ for all $n$,
\begin{align*}
\sup_{ \| \btheta \|_2 \leq R } \| \nabla \hat L_n (\btheta) - \nabla L_n (\btheta) \|_2 = O_{\PP} \bigg( \sqrt{ \frac{ d_n \log (n / d_n) }{n} } ;~ d_n \log \bigg( \frac{n}{d_n} \bigg) \bigg).
\end{align*}
\end{theorem}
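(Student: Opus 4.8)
The plan is to apply Theorem~\ref{lemma-O-covering} to the stochastic process $X_n(\btheta) = \| \nabla \hat L_n(\btheta) - \nabla L_n(\btheta) \|_2$ over the index set $\cS_n = \{ \btheta : \| \btheta \|_2 \leq R \}$, equipped with the Euclidean metric. This requires three ingredients: a pointwise tail bound at a net, a suitable net with controlled cardinality, and a Lipschitz estimate of the process. For the pointwise bound, fix $\btheta$ and write $\nabla \hat L_n(\btheta) - \nabla L_n(\btheta) = \frac1n \sum_{i=1}^n \bZ_{ni}$ where $\bZ_{ni} = \bX_{ni} \ell'(\btheta^\top \bX_{ni}) - \EE[\bX_{ni} \ell'(\btheta^\top \bX_{ni})]$. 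Since $|\ell'(\btheta^\top \bX_{ni})| \leq |\ell'(0)| + |\ell''|_\infty |\btheta^\top \bX_{ni}| \leq 1 + R \|\bX_{ni}\|_2 \cdot (\text{something})$, the coordinates of $\bZ_{ni}$ are products of a bounded-by-sub-Gaussian factor with a sub-Gaussian factor, hence sub-exponential with $\|\cdot\|_{\psi_1}$ norm $\lesssim 1$ (uniformly in $\btheta$ over the ball, using $R = O(1)$). I would then invoke the $\ell_2$-norm bound for sub-exponential vectors in $\RR^{d_n}$ — the same route as in ``$\ell_2$ norms of sub-Gaussian vectors'' combined with Example~\ref{lemma-average} coordinatewise, or a direct Bernstein-type argument for $\|\frac1n\sum \bZ_{ni}\|_2$ — to get, for a net point, $X_n(\btheta) = O_{\PP}(\sqrt{d_n/n} \, ; \, d_n)$ at resolution $r_n = d_n$; but we actually want resolution $r_n = d_n \log(n/d_n)$, so I rescale: since the size argument $\sqrt{d_n/n}$ can be traded against the tail, taking size $\sqrt{d_n \log(n/d_n)/n}$ gives tail $r_n = d_n\log(n/d_n)$, exactly as in Example~1.

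Next, for the net: take $\cN_n$ an $\varepsilon_n$-net of the ball of radius $R$ with $\varepsilon_n = \sqrt{d_n/n}$ (up to constants), which by the standard volumetric bound (Lemma~5.2 in \cite{Ver12}, rescaled to radius $R$) has $\log|\cN_n| \lesssim d_n \log(R/\varepsilon_n) = d_n \log(R\sqrt{n/d_n}) \asymp d_n \log(n/d_n)$ when $n \geq C d_n$. This matches the required $\log|\cN_n| \lesssim r_n = d_n\log(n/d_n)$, and applying Lemma~\ref{lemma-O-union} gives $\max_{\btheta \in \cN_n} X_n(\btheta) = O_{\PP}(\sqrt{d_n\log(n/d_n)/n} \, ; \, d_n\log(n/d_n))$, discharging hypothesis~1 of Theorem~\ref{lemma-O-covering} with $Y_n = \sqrt{d_n\log(n/d_n)/n}$ and hypothesis~2.

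For the Lipschitz hypothesis (3): differentiating, $\nabla_{\btheta} X_n(\btheta)$ relates to $\nabla^2 \hat L_n(\btheta) - \nabla^2 L_n(\btheta)$, so $|X_n(\btheta) - X_n(\btheta')| \leq M_n \|\btheta - \btheta'\|_2$ with $M_n = \sup_{\btheta}\|\nabla^2 \hat L_n(\btheta)\|_2 + \sup_{\btheta}\|\nabla^2 L_n(\btheta)\|_2$ (more carefully: $\| \nabla\hat L_n(\btheta) - \nabla\hat L_n(\btheta')\|_2 \leq \sup \|\nabla^2 \hat L_n\|_2 \cdot \|\btheta-\btheta'\|_2$ and likewise for $L_n$, then triangle inequality). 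Lemma~\ref{lemma-Hessian} gives precisely $M_n = O_{\PP}(1 \, ; \, n)$, so I take $Z_n = 1$, $s_n = n$ in the theorem. The crude bound $M_n \rho_n(\btheta,\btheta')$ is of the simpler form with the supremum-of-$X_n$ term absent, which is allowed. Theorem~\ref{lemma-O-covering} then yields
\[
\sup_{\|\btheta\|_2 \leq R} X_n(\btheta) = O_{\PP}\big( |Y_n| + \varepsilon_n |Z_n| \, ; \, r_n \wedge s_n \big),
\]
and since $\varepsilon_n \asymp \sqrt{d_n/n} \lesssim \sqrt{d_n\log(n/d_n)/n} = |Y_n|$ and $r_n \wedge s_n = d_n\log(n/d_n) \wedge n = d_n\log(n/d_n)$ (using $n \geq Cd_n$ so that $\log(n/d_n) \leq n/d_n$), this is exactly the claimed $O_{\PP}(\sqrt{d_n\log(n/d_n)/n} \, ; \, d_n\log(n/d_n))$.

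\textbf{Main obstacle.} The delicate step is the \emph{uniform-in-$\btheta$} sub-exponential control needed for the pointwise tail bound: I must verify that $\|(\bZ_{ni})_j\|_{\psi_1}$ is bounded by an absolute constant \emph{uniformly over} $\|\btheta\|_2 \leq R$ — this is where $R = O(1)$, $|\ell'(0)| \leq 1$, and $\|\ell''\|_\infty \leq 1$ all enter (they bound $|\ell'(\btheta^\top\bX_{ni})|$ by $1 + \|\btheta\|_2 \|\bX_{ni}\|_2 \cdot$const, whose product with the sub-Gaussian $\bX_{ni}$ is sub-exponential) — and then centering preserves the $\psi_1$ bound up to a constant. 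One must also be slightly careful that the constant $C$ in ``$n \geq Cd_n$'' is chosen large enough that $\log(n/d_n) \geq 1$, so the resolution $d_n\log(n/d_n)$ genuinely dominates and $\varepsilon_n |Z_n|$ is absorbed into $|Y_n|$; everything else is bookkeeping with the operations already established in Sections~\ref{section-basic-notations}--\ref{section-uniform-notations}.
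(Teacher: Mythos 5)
Your overall architecture (pointwise bound at a net, net cardinality $\lesssim d_n\log(n/d_n)$, Lipschitz control via the Hessian through Lemma~\ref{lemma-Hessian}, then Theorem~\ref{lemma-O-covering}) matches the paper's, but you make one structurally different choice: you take the process to be the vector norm $X_n(\btheta)=\|\nabla\hat L_n(\btheta)-\nabla L_n(\btheta)\|_2$ indexed by $\btheta$ alone, whereas the paper works with the scalar process $X_n(\btheta,\bu)=\langle \nabla\hat L_n(\btheta)-\nabla L_n(\btheta),\bu\rangle$ on the product index set $\{\|\btheta\|_2\le R\}\times\SSS^{d_n-1}$. Your choice makes hypothesis~3 cleaner (the reverse triangle inequality gives a genuine Lipschitz bound with $M_n$ alone, so you never need the self-bounding $\frac{1}{2\varepsilon_n}\sup|X_n|$ term that the paper has to invoke to handle $\|\Delta_n(\bxi)\|_2\|\bu-\bv\|_2$). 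But it pushes all the difficulty into hypothesis~1, and that is where your proposal has a gap.

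The gap: you need, at each fixed net point $\btheta$, a tail bound of the form $\PP\big(\|\frac1n\sum_i\bZ_{ni}\|_2\ge t\sqrt{r_n/n}\big)\le C_1e^{-r_nf(t)}$ with $r_n=d_n\log(n/d_n)$, where the $\bZ_{ni}$ are centered independent vectors with sub-exponential (not sub-Gaussian) one-dimensional marginals. Neither of the two justifications you offer is available off the shelf. The ``Example~\ref{lemma-average} coordinatewise'' route fails quantitatively: passing from coordinatewise bounds to the $\ell_2$ norm via $\|\bv\|_2\le\sqrt{d_n}\|\bv\|_\infty$ costs a factor $\sqrt{d_n}$ and yields size $d_n\sqrt{\log(n/d_n)/n}$ instead of $\sqrt{d_n\log(n/d_n)/n}$. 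The $\ell_2$-norm example in the paper (via \cite{HKZ12}) applies only to sub-Gaussian vectors, and $\bX_{ni}\ell'(\btheta^\top\bX_{ni})$ is only sub-exponential. The ``direct Bernstein-type argument'' does work, but it consists of writing $\|\bv\|_2\le 2\max_{\bu\in\cM}\langle\bv,\bu\rangle$ for a $1/2$-net $\cM$ of $\SSS^{d_n-1}$ with $|\cM|\le 5^{d_n}$, applying Bernstein to each $\langle\cdot,\bu\rangle$ (using $r_n\le n$ so the sub-exponential regime does not bite), and absorbing $d_n\log 5$ into $r_nf(t)$ via $d_n\lesssim r_n$ — i.e., exactly the sphere-covering that the paper performs once and for all by putting $\bu\in\SSS^{d_n-1}$ into the index set and letting Lemma~\ref{lemma-O-union} do the union bound over the product net. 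So your route is salvageable, but as written the pointwise step is asserted rather than proved, and one of your two proposed proofs of it is incorrect. The rest of your argument — the $\psi_1$ control of $(\bu^\top\bX_{ni})\ell'(\btheta^\top\bX_{ni})$ uniformly over $\|\btheta\|_2\le R$, the net cardinality computation, $M_n=O_{\PP}(1;\,n)$ from Lemma~\ref{lemma-Hessian}, and the absorption of $\varepsilon_n$ into $\sqrt{d_n\log(n/d_n)/n}$ when $n\ge Cd_n$ with $C\ge e$ — agrees with the paper and is correct.
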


\begin{proof}
Define
\begin{align*}
& X_n (\btheta, \bu) = \langle  \nabla \hat L_n (\btheta) - \nabla L_n (\btheta) , \bu  \rangle , \qquad \forall 
\btheta \in \RR^{d_n},~ \bu \in \SSS^{d_n - 1} , \\
& \cS_n = \{ \bx \in \RR^{d_n}:~ \| \bx \|_2 \leq R \} \times \SSS^{d_n - 1}, \\
& \rho_n ( (\btheta,\bu) , (\bxi, \bv) ) = ( \| \btheta - \bxi \|_2^2 + \| \bu - \bv \|_2^2 )^{1/2}, 
\qquad \forall (\btheta,\bu) \in \cS_n, ~ (\bxi, \bv) \in \cS_n,
\end{align*}
and $r_n = d_n \log (n/d_n)$. Note that
\begin{align*}
\sup_{ \| \btheta \|_2 \leq R } \| \nabla \hat L_n (\btheta) - \nabla L_n (\btheta) \|_2 
= \sup_{ \| \btheta \|_2 \leq R } \sup_{\bu \in \SSS^{d_n - 1} } \langle \nabla \hat L_n (\btheta) - \nabla L_n (\btheta) , \bu \rangle = \sup_{(\btheta, \bu) \in \cS_n} |X_n (\btheta, \bu)|.
\end{align*}
We will invoke Theorem \ref{lemma-O-covering} to bound the supremum.

\begin{enumerate}
\item For any $(\btheta, \bu) \in \cS_n$, we have
\begin{align*}
X_n (\btheta, \bu) = \langle \nabla \hat L_n (\btheta) - \nabla L_n (\btheta) , \bu \rangle =
\frac{1}{n} \sum_{i=1}^{n} (\bu^{\top} \bX_{ni}) \ell'(\btheta^{\top} \bX_{ni}) - \EE [(\bu^{\top} \bX_{ni}) \ell'(\btheta^{\top} \bX_{ni})].
\end{align*}
Since $|\ell'(0)| \leq 1$ and $\ell'$ is 1-Lipschitz, we have $|\ell'(x)| \leq 1 + |x|$, $\forall x$ and
\begin{align*}
& \| (\bu^{\top} \bX_{ni}) \ell'(\btheta^{\top} \bX_{ni}) \|_{\psi_1} 
 \leq \| \bu^{\top} \bX_{ni} \|_{\psi_1} + \|
| \bu^{\top} \bX_{ni}| \cdot |\btheta^{\top} \bX_{ni} |  \|_{\psi_1} \\
& \lesssim \| \bu^{\top} \bX_{ni} \|_{\psi_2} +  \| \bu^{\top} \bX_{ni} \|_{\psi_2} \| \btheta^{\top} \bX_{ni}  \|_{\psi_2} \leq 1 + R, \qquad \forall (\btheta, \bu) \in \cS_n.
\end{align*}
Example \ref{lemma-average} and the assumption $R \lesssim 1$ yield
\begin{align*}
\{ X_n (\btheta, \bu) \}_{ (\btheta, \bu) \in \cS_n } = O_{\PP} ( \sqrt{r_n / n} ;~ r_n \wedge n ).
\end{align*}
\item Let $\varepsilon_n = 2 \sqrt{(R^2 + 1) d_n / n }$. It follows from $\cS_n \subseteq \{ \bx \in \RR^{2d_n}:~ \|x \|_2^2 \leq R^2 + 1 \}$ and Lemma 5.2 in \cite{Ver12} that there is an $\varepsilon_n$-net $\cN_n$ of $\cS_n$ with cardinality at most $(1 + 2 \sqrt{R^2 + 1} / \varepsilon_n)^{2d_n}$. When $n / d_n$ is large enough, we have
\begin{align*}
\log |\cN_n| \leq  2d_n \log (1 + 2 \sqrt{R^2 + 1} / \varepsilon_n) = 2d_n \log (1 + \sqrt{n / d_n}) \lesssim d_n \log ( \sqrt{n / d_n}) = r_n / 2.
\end{align*}
\item Define $\Delta_n (\btheta) = \nabla  \hat L_n (\btheta) - \nabla L_n (\btheta) $. By triangle's inequality,
\begin{align*}
& | X_n(\btheta, \bu) - X_n(\bxi, \bv) | 
 = | \langle \Delta_n(\btheta) , \bu \rangle - \langle \Delta_n(\bxi) , \bv \rangle |
 \leq  |\langle  \Delta_n (\btheta) -  \Delta_n (\bxi) , \bu \rangle| + |\langle \Delta_n (\bxi) , \bu - \bv \rangle| \\
& \leq \| \Delta_n (\btheta) -  \Delta_n (\bxi) \|_2 + \| \Delta_n(\bxi) \|_2 \| \bu - \bv \|_2.
\end{align*}
Let $M_n = \sup_{ \btheta \in \RR^{d_n}} \| \nabla^2 \hat L_n (\btheta) - \nabla^2  L_n (\btheta) \|_2$. Lemma \ref{lemma-Hessian} implies that $M_n = O_{\PP} (1;~n)$. On the one hand,
\begin{align*}
\| \Delta_n (\btheta) -  \Delta_n (\bxi) \|_2 \leq \| [ \nabla \hat L_n(\btheta) - \nabla L_n(\btheta) ] - [ \nabla \hat L_n(\bxi) - \nabla L_n(\bxi) ] \|_2
\leq M_n  \| \btheta - \bxi \|_2.
\end{align*}
On the other hand,
\begin{align*}
\| \Delta_n(\bxi) \|_2 =  \| \nabla  \hat L_n (\bxi) - \nabla L_n (\bxi) \|_2 
= \sup_{\bw \in \SSS^{d_n - 1} } X_n(\bxi, \bw) \leq \sup_{ (\bm{\eta}, \bw) \in \cS_n } |X_n(\bm{\eta}, \bw)| .
\end{align*}
When $ n \geq 16 (R^2 + 1) d_n $, we have $\varepsilon_n \leq 1/2$ and thus
\begin{align*}
| X_n(\btheta, \bu) - X_n(\bxi, \bv) |  \leq \bigg( M_n + \frac{1}{2 \varepsilon_n} \sup_{ (\bm{\eta}, \bw) \in \cS_n } |X_n(\bm{\eta}, \bw)| \bigg) \rho_n ( (\btheta, \bu) , (\bxi, \bv)  ).
\end{align*}
\end{enumerate}
On top of all these, Theorem \ref{lemma-O-covering} implies that
\begin{align*}
\sup_{(\btheta, \bu) \in \cS_n} |X_n (\btheta, \bu)| = O_{\PP} ( \sqrt{r_n / n} + \varepsilon_n ;~( r_n \wedge n ) \wedge n ).
\end{align*}
When $n > d_n$, we have $0 < \log (n / d_n) < n / d_n$ and thus $0 < r_n < n$. Also, $\varepsilon_n \lesssim \sqrt{d_n / n} \lesssim r_n$. Therefore,
\begin{align*}
\sup_{(\btheta, \bu) \in \cS_n} |X_n (\btheta, \bu)| = O_{\PP} ( \sqrt{r_n / n}  ;~ r_n  )
= O_{\PP} ( \sqrt{d_n \log(n / d_n) / n}  ;~ d_n \log(n / d_n) ).
\end{align*}
\end{proof}

\section*{Acknowledgements}
The author thanks Jianqing Fan, Emmanuel Abbe, Miklos Racz and Yuling Yan for helpful discussions and gratefully acknowledges support from the Harold W. Dodds Fellowship.

{
\bibliographystyle{ims}
\bibliography{bib}
}

\end{document}